
\setcounter{tocdepth}{4}

\documentclass[12pt]{amsart}
\usepackage{amscd,amssymb}
\usepackage[mathscr]{eucal}
\usepackage[english]{babel}
\input pstricks
\input pst-node

\usepackage{pstricks, pst-node}
\usepackage[all]{xy}

\usepackage{amsmath}

\usepackage[latin1]{inputenc}%

\newlength{\itemlaenge}



\newtheoremstyle{mytheorem}
  {}
  {}
  {\slshape}
  {}
  {\scshape}
  {.}
  { }
  {}

\newtheoremstyle{mydefinition}
  {}
  {}
  {\upshape}
  {}
  {\scshape}
  {.}
  { }
  {}

\theoremstyle{mytheorem}
\newtheorem{lemma}{Lemma}[section]
\newtheorem{prop}[lemma]{Proposition}
\newtheorem*{prop*}{Proposition}

\newtheorem{cor}[lemma]{Corollary}

\newtheorem{thm}[lemma]{Theorem}

\newtheorem*{thm*}{Theorem}
\theoremstyle{mydefinition}
\newtheorem{rem}[lemma]{Remark}
\newtheorem*{rem*}{Remark}

\newtheorem*{notation*}{Notation}

\newtheorem*{warning*}{Warning}

\newtheorem{defi}[lemma]{Definition}
\newtheorem*{defi*}{Definition}

\newtheorem{exo}[lemma]{Example}

\numberwithin{equation}{section}

\newcommand{\bqn}{\begin{equation*}}
\newcommand{\eqn}{\end{equation*}}
\newcommand{\bq}{\begin{equation}}
\newcommand{\eq}{\end{equation}}
\newcommand{\ba}{\begin{aligned}}
\newcommand{\ea}{\end{aligned}}
\newcommand{\be}{\begin{enumerate}}
\newcommand{\ee}{\end{enumerate}}

\newcommand{\thismonth}{\ifcase\month 
  \or January\or February\or March\or April\or May\or June%
  \or July\or August\or September\or October\or November%
  \or December\fi}

%
%
%


\newcommand{\PSL}{\operatorname{PSL}}

\newcommand{\DD}{{\mathbb D}}

\newcommand{\NN}{{\mathbb N}}

\newcommand{\RR}{{\mathbb R}}

\newcommand{\ZZ}{{\mathbb Z}}

\newcommand{\Cc}{{\mathcal C}}
\newcommand{\Dd}{{\mathcal D}}

\newcommand{\Ff}{{\mathcal F}}

\newcommand{\Nn}{{\mathcal N}}

\newcommand{\G}{\Gamma}

\def\h{{\rm H}}

\def\T{\operatorname{T}}

\def\one{\mathbf{1\kern-1.6mm 1}}

\def\homeo#1{\operatorname{Homeo}^+\!\left(#1\right)}

\def\diam{{\operatorname{diam}}}

\def\h2{{\operatorname{H_2}}}
\def\h1{{\operatorname{H_1}}}

\def\PSL{\operatorname{PSL}}

\def\cs{{\check S}}

\def\hhs{S''}

\def\kgb{\kappa_G^{\rm b}}

\def\to{\rightarrow}

\def\h{{\rm H}}

\renewcommand{\phi}{\varphi}

\def\No{N\raise4pt\hbox{\tiny o}\kern+.2em}
\def\no{n\raise4pt\hbox{\tiny o}\kern+.2em}

%
%
%
%



\newcommand{\homr}{\textup{Homeo}_{\ZZ}(\RR)}
\newcommand{\homrp}{\textup{Homeo}^+_{\ZZ}(\RR)}

\newcommand{\gp}{G^+}

%

\begin{document}

\title[Order preserving representations]{On order preserving representations}
\author[G.~Ben Simon]{G.~Ben Simon}
\email{gabib@braude.ac.il}
\address{Department of Mathematics, ORT Braude College, P.O. Box 78, Karmiel 2161002, Israel}
\author[M.~Burger]{M.~Burger}
\email{burger@math.ethz.ch}
\address{Department Mathematik, ETH Z\"urich, R\"amistrasse 101, CH-8092 Z\"urich, Switzerland}
\author[T.~Hartnick]{T.~Hartnick}
\email{hartnick@tx.technion.ac.il}
\address{Mathematics Department, Technion - Israel Institute of Technology, Haifa, 32000, Israel}
\author[A.~Iozzi]{A.~Iozzi}
\email{iozzi@math.ethz.ch}
\address{Department Mathematik, ETH Z\"urich, R\"amistrasse 101,
  CH-8092 Z\"urich, Switzerland}
\author[A.~Wienhard]{A.~Wienhard}
\email{wienhard@mathi.uni-heidelberg.de}
\address{Mathematisches Institut, Ruprecht-Karls-Universit\"at Heidelberg, 69120 Heidelberg, Germany\newline
HITS gGmbH, Heidelberg Institute for Theoretical Studies, Schloss-Wolfs\-brunnen\-weg 35, 69118 Heidelberg, Germany}
\thanks{M.~B. was partial supported by the Swiss National Science Foundation project  200020-144373; 
T.~H. was partial supported by the Swiss National Science Foundation project 2000021-127016/2; 
A.~I. was partial supported by the Swiss National Science Foundation projects 2000021-127016/2 and 200020-144373;
A.~W. was partially supported by the National Science Foundation under agreement No. DMS-1065919 and 0846408,  by the Sloan Foundation, by the Deutsche Forschungsgemeinschaft, by the European Research Council under ERC-Consolidator grant no.\ 614733, and by the Klaus-Tschira-Foundation. 
Support by the Institut Mittag-Leffler (Djursholm, Sweden) and by the Institute for Advanced Study (Princeton, NJ) is gratefully
acknowledged.}


\date{\today}

\begin{abstract} 
In this article we introduce order preserving representations of fundamental groups of surfaces into Lie groups with bi-invariant orders. 
By relating order preserving representations to weakly maximal representations, introduced in \cite{BBHIW1}, we show 
that order preserving representations into Lie groups of Hermitian type are faithful with discrete image and that the set of order preserving 
representations is closed in the representation variety. 
For Lie groups of Hermitian type whose associated symmetric space is of tube type 
we give a geometric characterization of these representations in terms of the causal structure 
on the Shilov boundary. 
\end{abstract}
\maketitle
%
%
%


\section{Introduction}
Let $\Sigma$ be a compact oriented surface (possibly with boundary) of negative Euler characteristic and $\pi_1(\Sigma)$ its fundamental group.
When $\Sigma$ is closed, every discrete and faithful representations of $\pi_1(\Sigma)$ into $\PSL(2, \RR)$ is the holonomy representation of a hyperbolic structure on $\Sigma$ (possibly with reversed orientation).
In particular, if $\Gamma_1 , \Gamma_2$ are the fundamental groups of closed surfaces $\Sigma_1$, $\Sigma_2$ 
respectively, any abstract isomorphism $\Gamma_1 \to \Gamma_2$ is {\em geometric}, i.e. induced by a homeomorphism $\Sigma_1 \to \Sigma_2$ (Nielsen's theorem).
%

When $\Sigma$ has non-empty boundary, the situation is completely different. The fundamental group $\pi_1(\Sigma)$ is isomorphic to a free group, and there are many abstract isomorphisms of a free group, which are not induced by a homeomorphism of surfaces. Similarly, not every discrete and faithful representation  into $\PSL(2, \RR)$ is a {\em hyperbolization}, i.e. the holonomy representation of a complete hyperbolic structure on the interior of $\Sigma$. 
%
However, as explained in Theorem \ref{thm:thm1.1} and Corollary \ref{cor:cor1.2} below, we can associate to every compact oriented surface $\Sigma$ a partially bi-ordered group $(\Lambda_\Sigma, \leq_\Sigma)$ such that homeomorphisms of surfaces correspond precisely to isomorphisms of the associated ordered groups, and such that (oriented) hyperbolizations of $\Sigma$ correspond to {\em order preserving representations} into the universal covering of ${\rm PSL}_2(\RR)$ (in a sense made precise below).

This motivates the study of representations of $\pi_1(\Sigma)$ into other Lie groups $G$ which are {\em order preserving} in the sense that they induce order-preserving homomorphisms from $(\Lambda_\Sigma, \leq_\Sigma)$ into some partially bi-ordered central extension of $G$. When $G$ is a semisimple Lie group of Hermitian type we show that the set of order preserving representations forms a closed subset of the variety of representations of $\pi_1(\Sigma)$ into $G$ consisting entirely of discrete and faithful representations. This is achieved by relating order preserving representations to the set of {\em weakly maximal representations} with positive Toledo number introduced in \cite{BBHIW1}, and in turn provides a geometric characterization of weakly maximal representations. This characterization takes on a particular nice form when the Hermitian symmetric space associated to $G$ is of tube type. 

We now outline the content of this article in more details.

\subsection{Surface groups and orders}\label{sec:1.1}
Let $\Sigma$ be a compact oriented surface of negative Euler characteristic. We aim to define a group $\Lambda_\Sigma$, which depends functorially on $\pi_1(\Sigma)$ and carries a family of bi-invariant partial orders which reflect the geometry of $\Sigma$. 

If $\partial \Sigma \neq \emptyset$ we define $\Lambda_\Sigma$ simply as the group of based homotopy classes of homologically trivial loops in $\Sigma$, i.e.
\[
\Lambda_\Sigma := [\pi_1(\Sigma), \pi_1(\Sigma)].
\]
If $\Sigma$ is closed, we first form the central extension
\bq\label{eq:central extension}
\xymatrix{
0\ar[r]
&\ZZ\ar[r]
&\widehat\Gamma\ar[r]^-{p_\Sigma}
&\pi_1(\Sigma)\ar[r]
&0
}
\eq
corresponding to the positive generator of $\h^2(\pi_1(\Sigma),\ZZ)$ (which is well defined since $\Sigma$ is oriented) and then define
\[
\Lambda_\Sigma := [\widehat{\Gamma}, \widehat{\Gamma}].
\]
This definition ensures that every homomorphism $\rho: \pi_1(\Sigma) \to \PSL(2,\RR)$ lifts to a unique homomorphism $\widetilde{\rho}: \Lambda_\Sigma \to \widetilde{{\rm PSL}}_2(\RR)$ from $\Lambda_\Sigma$ into the universal  cover $\widetilde{{\rm PSL}}_2(\RR)$ of ${\rm PSL}_2(\RR)$.

To describe a family of orders on $\Lambda_\Sigma$ we observe that $\widetilde{{\rm PSL}}_2(\RR)$ is a subgroup of the group $\homrp$ of increasing homeomorphisms of the real line
commuting with integer translations, and that the latter group admits a family of bi-invariant orderings $(\leq_q)_{q \in \NN}$ defined by
\bqn
\varphi_1\leq_q\varphi_2\quad\text{ if }\quad \varphi_1=\varphi_2 \text{ or } \varphi_1(x)+q<\varphi_2(x)\text{ for all }x\in\RR.
\eqn
\begin{thm}\label{thm:InvariantOrder} Let $h$ be a complete hyperbolic structure on the interior $\Sigma^\circ$ of $\Sigma$ which is compatible with the orientation, let $\rho_h: \pi_1(\Sigma) \to \PSL(2,\RR)$ be the corresponding holonomy representation with lift $\widetilde{\rho}_h: \Lambda_\Sigma \to \widetilde{{\rm PSL}}_2(\RR) \subset \homrp$. Then the bi-invariant partial orders $\leq_{q,\Sigma}$ on $\Lambda_\Sigma$ given by
\bqn
\gamma_1\leq_{q,\Sigma}\gamma_2\quad\text{ if }\quad\widetilde\rho_h(\gamma_1)\leq_q\widetilde\rho_h(\gamma_2)
\eqn
are independent of the choice of complete hyperbolic structure $h$.
\end{thm}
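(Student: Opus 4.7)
My plan is to combine two ingredients: the topological rigidity of the boundary action on $S^1$ associated to a Fuchsian representation of $\pi_1(\Sigma)$, and a specific conjugation invariance property of the partial order $\leq_q$ when $q\in\NN$. For the first ingredient, given two orientation compatible complete hyperbolic structures $h_0,h_1$ on $\Sigma^\circ$, the associated holonomies $\rho_{h_0},\rho_{h_1}\colon\pi_1(\Sigma)\to\PSL(2,\RR)$ are discrete and faithful with finite covolume image. A standard rigidity argument---lifting a diffeomorphism between the two hyperbolic surfaces to a $\pi_1(\Sigma)$-equivariant quasi-isometry of $\HH^2$ and passing to boundaries---produces an orientation preserving homeomorphism $\phi\colon S^1\to S^1$ intertwining the two boundary actions. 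I would then lift $\phi$ to $\tilde\phi\in\homrp$. For any $\gamma\in\pi_1(\Sigma)$ (resp.\ $\gamma\in\widehat\Gamma$ in the closed case), the two elements $\tilde\phi\,\widetilde\rho_{h_0}(\gamma)\,\tilde\phi^{-1}$ and $\widetilde\rho_{h_1}(\gamma)$ of $\homrp$ project to the same element of $\homs$, hence differ by an integer translation $T_1^{c(\gamma)}$. The map $\gamma\mapsto c(\gamma)$ is seen to be a group homomorphism into $\ZZ$ (integer translations being central in $\homrp$), and therefore vanishes on the commutator subgroup $\Lambda_\Sigma$. Hence on $\Lambda_\Sigma$ we obtain the exact conjugation identity $\widetilde\rho_{h_1}(\gamma)=\tilde\phi\,\widetilde\rho_{h_0}(\gamma)\,\tilde\phi^{-1}$.

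For the second ingredient, I would prove the following key lemma: if $\varphi_1,\varphi_2\in\homrp$ satisfy $\varphi_1(x)+q<\varphi_2(x)$ for all $x\in\RR$ with $q\in\NN$, then the same inequality holds for the conjugates $\tilde\phi\,\varphi_i\,\tilde\phi^{-1}$ by any $\tilde\phi\in\homrp$. The verification is short: setting $y=\tilde\phi^{-1}(x)$ and applying the strictly increasing $\tilde\phi$ to $\varphi_1(y)+q<\varphi_2(y)$ gives $\tilde\phi(\varphi_1(y)+q)<\tilde\phi(\varphi_2(y))$, and the crucial identity $\tilde\phi(z+q)=\tilde\phi(z)+q$, valid precisely because $q\in\ZZ$ and $\tilde\phi$ commutes with integer translations, converts this into $\tilde\phi(\varphi_1(y))+q<\tilde\phi(\varphi_2(y))$. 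Applied to $\varphi_i=\widetilde\rho_{h_0}(\gamma_i)$ and combined with the conjugation identity above, this establishes the invariance of $\leq_{q,\Sigma}$.

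The main obstacle I anticipate is establishing the boundary conjugacy $\phi$ cleanly when $\Sigma$ has non-empty boundary, where the hyperbolization gives a non-cocompact lattice with cusps at the boundary loops; some care is needed to match cusp behavior while maintaining the orientation of $S^1$ induced by that of $\Sigma$, so that $\phi$ admits a lift to $\homrp$. Everything else, including the cocycle computation showing $c$ vanishes on commutators and the verification of the key lemma, is then essentially formal; in particular, the integrality of $q$ is what makes the argument work and explains why the same statement would fail for the coarser order obtained by allowing $q\in\RR$.
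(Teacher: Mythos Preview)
Your second ingredient---the conjugation invariance of $\leq_q$ in $\homrp$ for integer $q$---is correct and cleanly argued. The gap is in the first ingredient, and it is not the difficulty you anticipate.

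For a surface with boundary, a complete hyperbolic structure on $\Sigma^\circ$ need \emph{not} have cusps at the ends: the ends may be funnels (infinite-area flaring ends), in which case the boundary loops are sent to hyperbolic rather than parabolic elements. Both types are complete, and the theorem covers both. Now take two hyperbolizations $h_0,h_1$ of the same $\Sigma$, one with a cusp at a given boundary loop $c$ and one with a funnel. Then $\rho_{h_0}(c)$ is parabolic (one fixed point on $S^1$) while $\rho_{h_1}(c)$ is hyperbolic (two fixed points). Any $\phi\in\homs$ conjugating $\rho_{h_0}$ to $\rho_{h_1}$ would have to carry the fixed-point set of $\rho_{h_0}(c)$ bijectively to that of $\rho_{h_1}(c)$, which is impossible. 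So the boundary conjugacy simply fails to exist in general, and your argument breaks down before you ever reach the lift $\tilde\phi$ or the cocycle $c(\gamma)$. Replacing conjugacy by semiconjugacy (which does exist between maximal representations) does not rescue the key lemma, since a monotone non-injective $\tilde\phi$ only yields $\leq$ from $<$.

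The paper avoids this obstruction by a different route. It first proves (Lemma~\ref{lem:fundamental_class}) that the homogeneous quasimorphism $f_\Sigma:=\tau\circ\widetilde\rho_h$ on $\Lambda_\Sigma$ is independent of $h$, using that the pullback of the bounded K\"ahler class $\rho_h^*\kappa^{\rm b}_{\PSL(2,\RR)}$ is the same for every hyperbolization (a bounded-cohomology fact from \cite{Burger_Iozzi_Wienhard_tol}). It then shows, via Lemma~\ref{lem:homeo} applied to $T_q^{-1}\widetilde\rho_h(\gamma)$, that the order semigroup of $\leq_{q,\Sigma}$ is exactly $\{e\}\cup\{\gamma\in\Lambda_\Sigma: f_\Sigma(\gamma)>q\}$, which manifestly depends only on $f_\Sigma$ and not on $h$. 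Your argument is valid for closed $\Sigma$ (where both holonomies are cocompact and the boundary conjugacy is classical), and there it gives a pleasant geometric alternative; but for the general statement you need the quasimorphism characterization or something equivalent.
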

The theorem implies that the ordered groups $(\Lambda_\Sigma, \leq_{q,\Sigma})$ are \emph{topological invariants} of the surface $\Sigma$, i.e. every orientation-preserving homeomorphism $f: \Sigma_1 \to \Sigma_2$ of surfaces induces an isomorphism $(\Lambda_{\Sigma_1}, \leq_{q,\Sigma_1}) \to (\Lambda_{\Sigma_2}, \leq_{q, \Sigma_2}) $ of ordered groups for every $q \in \mathbb N$. Reversion of the orientation on $\Sigma$ simply reverses the orders $\leq_{q, \Sigma}$.

In the sequel we will call a homomorphism between ordered groups \emph{strictly order preserving} if it is order preserving and no strictly positive element is sent to the identity. We also denote $\leq_{\Sigma} := \leq_{0, \Sigma}$. The following theorem and its corollary show that the ordered group $(\Lambda_\Sigma, \leq_{\Sigma})$ is a complete topological invariant of $\Sigma$, which moreover detects hyperbolic structures.

\begin{thm}\label{thm:thm1.1}  Let $\rho\colon\pi_1(\Sigma)\to\PSL(2,\RR)$ be a homomorphism with associated lift $\widetilde{\rho}: \Lambda_\Sigma \to \widetilde{{\rm PSL}}_2(\RR) \subset \homrp$.  Then the following are equivalent:
\be
\item $\rho = \rho_h$ is the holonomy representation of a complete hyperbolic structure $h$ on $\Sigma^\circ$ which is compatible with the orientation.
\item $\widetilde\rho\colon(\Lambda_\Sigma, \leq_{q,\Sigma}) \to (\widetilde\PSL(2,\RR), \leq_q)$
is strictly order preserving for some $q \in \mathbb N$.
\item $\widetilde\rho\colon(\Lambda_\Sigma, \leq_{q,\Sigma}) \to (\widetilde\PSL(2,\RR), \leq_q)$
is strictly order preserving for every $q \in \mathbb N$.
\ee
\end{thm}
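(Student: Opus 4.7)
The plan is to prove the cycle $(1) \Rightarrow (3) \Rightarrow (2) \Rightarrow (1)$, concentrating all the work on the last implication. The implication $(1) \Rightarrow (3)$ is immediate from Theorem \ref{thm:InvariantOrder}: by construction $\gamma_1 \leq_{q,\Sigma} \gamma_2 \Leftrightarrow \widetilde\rho_h(\gamma_1) \leq_q \widetilde\rho_h(\gamma_2)$, so $\widetilde\rho_h$ is tautologically $(\leq_{q,\Sigma}, \leq_q)$-order preserving for every $q \in \NN$. Strictness is automatic, since $\gamma >_{q,\Sigma} e$ forces $\widetilde\rho_h(\gamma) >_q \id$, which is in particular not the identity. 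The implication $(3) \Rightarrow (2)$ is trivial.

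For $(2) \Rightarrow (1)$, fix a reference hyperbolization $\rho_0$ of $\Sigma^\circ$ compatible with the given orientation and let $\widetilde\rho_0 \colon \Lambda_\Sigma \to \widetilde\PSL(2,\RR) \subset \homrp$ be its lift. By construction the order $\leq_{q,\Sigma}$ is the pullback of $\leq_q$ under $\widetilde\rho_0$, so hypothesis (2) reads: there exists $q \in \NN$ such that $\widetilde\rho_0(\gamma) >_q \id \Rightarrow \widetilde\rho(\gamma) >_q \id$, and in particular $\widetilde\rho(\gamma) \neq \id$ for any such $\gamma$. The key step is to build from this data an increasing map $F \colon \RR \to \RR$ intertwining the two actions,
\[
F \circ \widetilde\rho_0(\gamma) \;=\; \widetilde\rho(\gamma) \circ F \qquad \text{for all } \gamma \in \Lambda_\Sigma.
\]
The construction follows the standard recipe for semi-conjugacies: fix basepoints $x_0, y_0 \in \RR$, define $F$ on the $\widetilde\rho_0(\Lambda_\Sigma)$-orbit of $x_0$ by $F(\widetilde\rho_0(\gamma) x_0) := \widetilde\rho(\gamma) y_0$, and extend to all of $\RR$ by monotone supremum. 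The $q$-shift is absorbed by pre-composing with an auxiliary element $\gamma_q \in \Lambda_\Sigma$ whose $\widetilde\rho_0$-image has translation number $>q$ (abundant since $\rho_0$ is a hyperbolization), thereby converting pointwise comparisons on the orbit into $>_q$-comparisons to which the order-preservation hypothesis applies; strictness of the order preservation upgrades weak to strict monotonicity, hence ensures well-definedness of $F$.

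The intertwiner $F$ is a semi-conjugacy of the $\Lambda_\Sigma$-actions on $\RR$, which descends to a semi-conjugacy of the $\pi_1(\Sigma)$-actions on $S^1$ induced by $\rho_0$ and $\rho$. By Matsumoto's theorem on the invariance of the bounded Euler class under semi-conjugacy, $\rho$ has the same bounded Euler class as $\rho_0$; in the closed case this is the maximal Euler number $2g-2$ with the orientation-compatible sign, and Goldman's theorem then identifies $\rho$ as an oriented hyperbolization. The case of non-empty boundary is handled analogously, the semi-conjugacy forcing each boundary holonomy of $\rho$ to be hyperbolic with the correct orientation. The main technical obstacle is the construction of $F$ in the presence of the $q$-shift, together with ensuring that $F$ descends compatibly with the centers on both sides: \emph{a priori} $F$ only satisfies $F(x + 2g-2) = F(x) + e$ where $e$ is the Euler number of $\rho$, so the genuine $\ZZ$-equivariance needed for a circle semi-conjugacy (and with it the maximality $e = 2g-2$) falls out only after combining the intertwining property of $F$ with the Milnor--Wood inequality.
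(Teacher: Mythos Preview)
Your $(1)\Rightarrow(3)\Rightarrow(2)$ is fine and matches the paper. For $(2)\Rightarrow(1)$, however, your semi-conjugacy construction has a genuine gap that the ``$\gamma_q$ trick'' does not close. Monotonicity of $F$ on the orbit reduces to: if $\widetilde\rho_0(\delta)x_0>x_0$ for $\delta\in\Lambda_\Sigma$, then $\widetilde\rho(\delta)y_0\geq y_0$. Now the condition $g>_q\id$ in $\homrp$ is equivalent to $\tau(g)>q$ (apply Lemma~\ref{lem:homeo} to $T_{-q}g$), so hypothesis~(2) says exactly: $f_\Sigma(\gamma)>q\Rightarrow \tau(\widetilde\rho(\gamma))>q$. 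But $\widetilde\rho_0(\delta)x_0>x_0$ only yields $\tau(\widetilde\rho_0(\delta))\geq 0$, not $>q$; pre-composing with $\gamma_q$ gives $\tau(\widetilde\rho_0(\gamma_q\delta))>q$, hence $\tau(\widetilde\rho(\gamma_q\delta))>q$, i.e.\ $\widetilde\rho(\gamma_q)\widetilde\rho(\delta)y>y+q$ for all $y$. Without independent control of $\widetilde\rho(\gamma_q)$ this says nothing about $\widetilde\rho(\delta)y_0$ versus $y_0$: the gain from $\gamma_q$ on the source side is not matched by any bound on the target side. A typical obstruction is $\delta$ with $\widetilde\rho_0(\delta)$ hyperbolic (so $\tau(\widetilde\rho_0(\delta))=0$) moving $x_0$ forward, for which the hypothesis is simply silent. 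Your own closing paragraph effectively concedes that both the construction of $F$ and its $\ZZ$-equivariance remain open; the proposal is a program, not a proof.

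The paper's route for $(2)\Rightarrow(1)$ avoids semi-conjugacies entirely. Since $\leq_{q,\Sigma}$ and $\leq_q$ are Archimedean and sandwiched by $f_\Sigma$ and $\tau$ respectively (Proposition~\ref{PropqSandwich} and the discussion after Lemma~\ref{lem:homeo}), Theorem~\ref{thm:thm1.5} gives $\tau\circ\widetilde\rho=\lambda f_\Sigma$ on $\Lambda_\Sigma$ for some $\lambda>0$; then $d\tau$ and $df_\Sigma$ both take values in $\{-1,0,1\}$, forcing $\lambda=1$, so $\rho$ is maximal and \cite[Theorem~3]{Burger_Iozzi_Wienhard_tol} identifies it as an oriented hyperbolization. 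This argument handles the boundary and closed cases uniformly. If you want to salvage your approach, the honest first step is exactly this quasimorphism identity---after which Matsumoto/Ghys would indeed produce the semi-conjugacy you want, but by then it is redundant.
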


\begin{cor}\label{cor:cor1.2} Let $\Sigma_i$, $i=1,2$ be connected oriented surfaces with negative Euler characteristic.
Then an isomorphism $i\colon\pi_1(\Sigma_1)\to\pi_1(\Sigma_2)$ is geometric, i.e. induced by an orientation preserving homeomorphism
$\Sigma_1\to\Sigma_2$, if and only if its lift $\widetilde{i}\colon(\Lambda_{\Sigma_1}, \leq_{\Sigma_1})\to(\Lambda_{\Sigma_2}, \leq_{\Sigma_2})$ is strictly order preserving.
\end{cor}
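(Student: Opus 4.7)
The plan is to derive Corollary \ref{cor:cor1.2} directly from Theorem \ref{thm:thm1.1} by transporting complete hyperbolic structures through the isomorphism $i$. Functoriality of the construction $\Sigma\mapsto \Lambda_\Sigma$ and uniqueness of lifts give that, for any representation $\rho\colon\pi_1(\Sigma_2)\to\PSL(2,\RR)$, the canonical lift of $\rho\circ i$ equals $\widetilde\rho\circ\widetilde i$; this identity, together with Theorem \ref{thm:thm1.1}, will be the only ingredient for one direction, while the converse will use Theorem \ref{thm:thm1.1} to produce a hyperbolic structure on $\Sigma_1^\circ$, followed by a rigidity-style construction of the homeomorphism.

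For the forward implication, suppose $i$ is induced by an orientation-preserving homeomorphism $f\colon\Sigma_1\to\Sigma_2$. Fix a complete hyperbolic structure $h_2$ on $\Sigma_2^\circ$ compatible with the orientation and set $h_1:=f^\ast h_2$; then $\rho_{h_1}=\rho_{h_2}\circ i$, hence $\widetilde\rho_{h_1}=\widetilde\rho_{h_2}\circ\widetilde i$. Since both $\widetilde\rho_{h_j}$ are strictly order preserving by Theorem \ref{thm:thm1.1}, a short diagram chase gives the same for $\widetilde i$: if $\gamma_1\leq_{\Sigma_1}\gamma_2$, then by definition $\widetilde\rho_{h_1}(\gamma_1)\leq\widetilde\rho_{h_1}(\gamma_2)$, hence $\widetilde\rho_{h_2}(\widetilde i(\gamma_1))\leq\widetilde\rho_{h_2}(\widetilde i(\gamma_2))$, hence $\widetilde i(\gamma_1)\leq_{\Sigma_2}\widetilde i(\gamma_2)$; and if $\gamma>0$ in $\Lambda_{\Sigma_1}$ then $\widetilde\rho_{h_1}(\gamma)\neq\id$, so $\widetilde\rho_{h_2}(\widetilde i(\gamma))\neq\id$, forcing $\widetilde i(\gamma)\neq 0$.

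For the converse, assume $\widetilde i$ is strictly order preserving. Choose a complete, orientation-compatible hyperbolic structure $h_2$ on $\Sigma_2^\circ$; by Theorem \ref{thm:thm1.1} the lift $\widetilde\rho_{h_2}$ is strictly order preserving, so the composition $\widetilde\rho_{h_2}\circ\widetilde i=\widetilde{\rho_{h_2}\circ i}$ is strictly order preserving as well. Applying the implication (2)$\Rightarrow$(1) of Theorem \ref{thm:thm1.1} to the representation $\rho_{h_2}\circ i\colon\pi_1(\Sigma_1)\to\PSL(2,\RR)$, we obtain a complete, orientation-compatible hyperbolic structure $h_1$ on $\Sigma_1^\circ$ with $\rho_{h_1}=\rho_{h_2}\circ i$. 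Using developing maps $D_j\colon\widetilde{\Sigma_j^\circ}\to\HH^2$ and this equality, the identity on $\HH^2$ intertwines the two group actions, so $D_2^{-1}\circ D_1$ is an $i$-equivariant orientation-preserving homeomorphism between universal covers which descends to an orientation-preserving homeomorphism $f^\circ\colon\Sigma_1^\circ\to\Sigma_2^\circ$ inducing $i$ on fundamental groups.

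The one delicate point, which I expect to be the main obstacle, is extending $f^\circ$ to a homeomorphism $f\colon\Sigma_1\to\Sigma_2$ of the compact surfaces. This is immediate when $\Sigma_j$ is closed (then $\Sigma_j^\circ=\Sigma_j$); in the bordered case one must match the ends of the hyperbolic surfaces $\Sigma_j^\circ$ with the boundary components of $\Sigma_j$. I would handle this by choosing both $h_1,h_2$ of the same preferred type (e.g.\ with totally geodesic boundary, obtained via the Nielsen convex-core construction), so that the compactification $\Sigma_j^\circ\hookrightarrow\Sigma_j$ is canonically determined by $h_j$ and $f^\circ$ is an isometry on a neighborhood of each end, hence extends continuously and in an orientation-preserving fashion to the boundary circles, yielding the desired geometric $f$.
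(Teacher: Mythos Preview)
Your proposal is correct and follows essentially the same route as the paper: both directions go through Theorem~\ref{thm:thm1.1} by transporting a complete hyperbolic structure along $i$, so that $\rho_{h_1}=\rho_{h_2}\circ i$, and then comparing developing maps to produce the homeomorphism. The paper treats the forward implication as already established (it is the content of the remark following Theorem~\ref{thm:InvariantOrder} that the ordered groups are topological invariants), and for the converse it phrases the endgame slightly differently: rather than building $f^\circ$ from scratch via $D_2^{-1}\circ D_1$, it first picks a continuous map $g\colon\Sigma_1^\circ\to\Sigma_2^\circ$ realizing $i$ (available since the $\Sigma_j^\circ$ are $K(\pi,1)$'s) and then observes that the developing diffeomorphism $f_{h'}^{(1)}$ is homotopic to $f_h^{(2)}\circ g$, whence $g$ is homotopic to a diffeomorphism. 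Your worry about extending $f^\circ$ across $\partial\Sigma_j$ is reasonable, but note that the paper's argument stays on the interiors as well and leaves this standard step implicit; your suggested fix via convex cores (or simply the fact that $\Sigma_j^\circ\hookrightarrow\Sigma_j$ is a homotopy equivalence and homotopy equivalences of compact surfaces are homotopic to homeomorphisms) is adequate.
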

In the case where $\partial \Sigma_1 = \partial \Sigma_2 = \emptyset$ we know from Nielsen's theorem that \emph{every} isomorphism $i\colon\pi_1(\Sigma_1)\to\pi_1(\Sigma_2)$ is geometric (hence order preserving) up to orientation-reversal. Corollary \ref{cor:cor1.2} can be seen as a variant of Nielsen's theorem for surfaces with boundary.

\subsection{Orders, order preserving homomorphisms and Hermitian groups}\label{Subsec1.2}  Given a bi-invariant order on a group $G$, 
we let $G^+:=\{g\in\G\colon\,g\geq e\}$ denote the set of positive elements.  It is a conjugacy invariant submonoid of $G$ and satisfies
$G^+\cap(G^+)^{-1}=\{e\}$.

While for general partial orders on a group $G$ there may be many elements $g,h \in G$ which are incomparable (i.e. neither $g \leq h$ nor $h \leq g$), we will be particularly interested in the following class of partial orders, in which elements are at least comparable to high enough powers of positive elements: 
\begin{defi}\label{defi:archimedean}  An order on a group $G$ is {\em Archimedean} if for any $g>e$ and $h$ arbitrary, 
there is an integer $n\geq1$ with $g^n\geq h$.
\end{defi}

Archimedean orders occur often in conjunction with orders {\em sandwiched} by quasimorphisms. This classs of orders was introduced in \cite{BenSimon_Hartnick_Comm} and further studied in \cite{BenSimon_Hartnick_JOLT, BenSimon_Hartnick_quasitotal}. It can be defined as follows.

\begin{defi}  An order on a group $G$ is sandwiched by a homogeneous quasimorphism $f:G\to\RR$
if there is a $C>0$ with 
\bqn
f^{-1}([C,\infty))\subset G^+\,.
\eqn
\end{defi}

It was established in \cite{BenSimon_Hartnick_Comm} that certain ratios of quasimorphisms can be reconstructed from sandwiched orders. We use this observation to establish the following abstract result that will allow us to link order preserving representations and weakly maximal representations.

\begin{thm}\label{thm:thm1.5}  Assume that the groups $H$ and $G$ are equipped with Archimedean orders which are
sandwiched respectively by homogeneous quasimorphisms $f_H$ and $f_G$.

If $\rho\colon H\to G$ is a strictly order preserving homomorphism, then, for some $\lambda>0$,
\bqn
f_G\circ\rho=\lambda\,f_H\,.
\eqn
\end{thm}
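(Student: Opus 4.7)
\medskip

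\noindent\textbf{Plan.} Set $f := f_G \circ \rho$. Since $\rho$ is a group homomorphism and $f_G$ is a homogeneous quasimorphism, $f$ is a homogeneous quasimorphism on $H$. The goal is to prove $f = \lambda f_H$ with $\lambda > 0$, and the key insight is that the sandwiching hypothesis lets the order ``read off'' the value of the quasimorphism up to a multiplicative constant.

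\medskip

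\noindent\textbf{Step 1 (positivity lemma).} My first observation, which I use for both $H$ and $G$, is: if $\leq$ is a bi-invariant order on a group $K$ sandwiched by a homogeneous quasimorphism $f_K$, then $k \in K^+$ implies $f_K(k) \geq 0$. If $f_K(k) < 0$, then $f_K(k^{-n}) = -n f_K(k) \to +\infty$, so $k^{-n} \in K^+$ for large $n$, while $k^n \in K^+$ as $K^+$ is a monoid; then $K^+ \cap (K^+)^{-1} = \{e\}$ forces $k^n = e$, whence $f_K(k) = 0$, a contradiction.

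\medskip

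\noindent\textbf{Step 2 (transport of comparisons).} Let $h_1, h_2 \in H$ with $a := f_H(h_1) > 0$ and $b := f_H(h_2) > 0$, and write $a' := f(h_1)$, $b' := f(h_2)$. For integers $p, q \geq 1$ the defect $D_H$ of $f_H$ gives
\[
|f_H(h_2^{p} h_1^{-q}) - (p b - q a)| \leq D_H.
\]
If $pb - qa \geq C_H + D_H$ (sandwiching constant plus defect), then $h_2^{p} h_1^{-q} \in H^+$; furthermore $f_H(h_2^p h_1^{-q}) \geq C_H > 0$ prevents this element from being the identity, so $h_2^p h_1^{-q} > e$ strictly. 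Strict order preservation now yields $\rho(h_2)^p \rho(h_1)^{-q} > e$ in $G$, so by Step 1 applied in $G$, $f_G(\rho(h_2)^p \rho(h_1)^{-q}) \geq 0$, which combined with the defect of $f_G$ gives $p b' - q a' \geq -D_G$.

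\medskip

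\noindent\textbf{Step 3 (constancy of the ratio).} Choosing rational approximations $p/q$ decreasing to $a/b$ (with the margin needed so that $pb - qa \geq C_H + D_H$) and letting $q \to \infty$ in $pb' - qa' \geq -D_G$, I obtain $a b' - b a' \geq 0$, i.e. $a'/a \leq b'/b$. The opposite inequality follows by swapping the roles of $h_1$ and $h_2$, so $\lambda := a'/a$ is a constant independent of the choice of $h \in H$ with $f_H(h) > 0$. For $h$ with $f_H(h) < 0$, apply this to $h^{-1}$; for $h$ with $f_H(h) = 0$, exploit homogeneity and the standard trick $f(h^n h') = n f(h) + f(h') + O(1)$ with $h'$ chosen to have $f_H(h') > 0$ large: since $f_H(h^n h') > 0$ forces $f(h^n h') = \lambda f_H(h^n h') = \lambda f_H(h') + O(1)$, the term $n f(h)$ must be $O(1)$, so $f(h) = 0$.

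\medskip

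\noindent\textbf{Step 4 (positivity of $\lambda$).} Here is where the Archimedean hypothesis on $G$ enters. Since $f_G \not\equiv 0$ (otherwise sandwiching forces $G^+ = G$, hence $G = \{e\}$), pick $g_1 \in G$ with $f_G(g_1) > D_G$. Pick $h \in H$ with $f_H(h) > 0$ and $h > e$ strictly (such $h$ exists by applying sandwiching to a high power of any element with positive $f_H$-value), so that $\rho(h) > e$ strictly in $G$. By the Archimedean property of $G$, there is $n \geq 1$ with $\rho(h)^n \geq g_1$, hence $\rho(h)^n g_1^{-1} \in G^+$, hence $n f(h) - f_G(g_1) + O(1) \geq 0$. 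If $\lambda = 0$ then $f(h) = 0$, giving $f_G(g_1) \leq D_G$, contradicting our choice of $g_1$.

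\medskip

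\noindent\textbf{Main obstacle.} The delicate point is Step 2: ensuring that the comparison $h_2^{p} h_1^{-q} \in H^+$ is \emph{strict} (so that strict order preservation can be applied) requires the quantitative margin $pb - qa \geq C_H + D_H$ rather than the naive positivity $pb - qa > 0$. Without strict order preservation, we would only get $\rho(h_2)^p \rho(h_1)^{-q} \in G^+$, and Step 1 would still apply — but the failure mode for $\lambda$ to vanish or for the ratio to be ill-defined on torsion-like elements would not be ruled out; this is precisely why the hypothesis ``strictly'' in the statement is essential, and it is what makes the quasimorphism values on $H$ transport rigidly rather than just coarsely into $G$.
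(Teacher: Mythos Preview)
Your proof is correct. The paper packages the same ideas differently: it first establishes a ``reconstruction theorem'' (Theorem~\ref{thm:growth function} and Corollary~\ref{cor:sandwich}) showing that any two homogeneous quasimorphisms sandwiching the same order semigroup are positive multiples of each other, via the \emph{relative growth function} $e(g,h) = \lim_n \min\{p : g^p \succeq h^n\}/n$, which equals $f(h)/f(g)$ for any sandwiching $f$. It then observes (Lemmas~\ref{dominantsFunctorial} and~\ref{SandwichFunctorial}) that $\rho^{-1}(G^{++})$ is a dominant semigroup on $H$ sandwiched by $f_G \circ \rho$; since strict order preservation gives $H^{++} \subset \rho^{-1}(G^{++})$, this pulled-back order is also sandwiched by $f_H$, and Corollary~\ref{cor:sandwich}(2) finishes. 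Your Steps~2--3 are essentially an inline proof of the reconstruction theorem (your rational approximation of $a/b$ by $p/q$ is exactly the relative-growth computation), while your Step~4 replaces the functoriality lemma by a direct appeal to the Archimedean property of $G$. The paper's modular route makes the division of labour between the hypotheses more visible, but your direct argument is equally valid and more self-contained.

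Two minor remarks. First, your Step~4 in fact yields $f(h) > 0$ directly (from $nf(h) \geq f_G(g_1) - D_G > 0$), so $\lambda > 0$ follows at once; there is no need to argue by contradiction against $\lambda = 0$, and as written you have not explicitly excluded $\lambda < 0$. Second, your ``main obstacle'' paragraph mislocates the role of strictness: in Step~2, mere order preservation would already give $\rho(h_2^p h_1^{-q}) \in G^+$, and Steps~1--3 go through unchanged. Strictness is genuinely needed only in Step~4, to ensure $\rho(h) > e$ in $G$ so that the Archimedean property can be invoked.
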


Given any bi-invariant ordering on  a group $G$, the set of \emph{dominant elements} \cite{Eliashberg_Polterovich} is defined as
\bqn
G^{++}\:=\{g\in G^+\colon \text{ for every }h\in H,\text{ there is }n\geq1\text{ with }g^n\geq h\}.
\eqn
It turns out that $G^{++}$ is the set of strictly positive elements of an Archimedean order on $G$, called the {\em associated Archimedean order}. It has in general fewer positive elements than the original one.

When $G^+$ is sandwiched by a homogeneous quasimorphism $f_G$,
we have the simple characterization 
\bqn
G^{++}=\{g\in G^+\colon f_G(g)>0\}\,,
\eqn
which shows in particular that $G^{++}$ is not empty, and that it is still sandwiched by $f_G$.

Let now $G$ be a connected simple Lie group of Hermitian type (meaning that the associated symmetric space is Hermitian) and with finite center.
Then there is a canonical connected $\ZZ$-extension $\check G$ of $G$; it admits
continuous bi-invariant orderings and they have been classified 
\cite{Hilgert_Hofmann_Lawson, Hilgert_Olafsson, Olshanski, BenSimon_Hartnick_Comm}.
It also admits an essentially unique (continuous) homogeneous quasimorphism
$f_G$ and it is shown in \cite{BenSimon_Hartnick_Comm} that any continuous order is sandwiched by $f_G$.

As a consequence of this and the characterization of weakly maximal representations obtained
in \cite[Proposition~3.2]{BBHIW1} we obtain the folllowing:

\begin{thm}\label{thm:thm1.6}  Let $\rho\colon\pi_1(\Sigma)\to G$ be a representation, and let $\preceq$ be the Archimedean ordering associated with a continuous bi-invariant ordering on $\check G$.
The following are equivalent:
\be
\item  The representation $\rho$ is weakly maximal with positive Toledo invariant.
\item There exists $q \in \mathbb N_0$ such that the induced homomorphism
\bqn
\widetilde\rho: (\Lambda_\Sigma, \leq_{q, \Sigma}) \to(\check G, \preceq)
\eqn
is strictly order preserving.
\ee
\end{thm}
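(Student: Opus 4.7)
The plan is to translate the order-theoretic statement in~(2) into the single quasimorphism identity $f_G\circ\widetilde\rho=\lambda\,(\tau\circ\widetilde\rho_h)$ with $\lambda>0$, where $\tau\colon\homrp\to\RR$ is the translation-number quasimorphism and $\widetilde\rho_h$ is the lift of the holonomy of some auxiliary hyperbolization $h$ of $\Sigma^\circ$, and then recognize this identity as the characterization of weakly maximal representations of positive Toledo invariant furnished by \cite[Proposition~3.2]{BBHIW1}. The bridge in both directions is Theorem~\ref{thm:thm1.5}, applied with $f_H:=\tau\circ\widetilde\rho_h$ on $\Lambda_\Sigma$ and the given $f_G$ on $\check G$.

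The preparatory step is to check that $\leq_{q,\Sigma}$ is Archimedean on $\Lambda_\Sigma$ and is sandwiched by $f_H$. For any non-trivial $\gamma\geq_{q,\Sigma}e$ one has $\widetilde\rho_h(\gamma)(x)>x+q$ for all $x\in\RR$; since $\widetilde\rho_h(\gamma)$ commutes with integer translations, the displacement $x\mapsto\widetilde\rho_h(\gamma)(x)-x$ is continuous and $1$-periodic, hence attains a uniform lower bound $c>0$ (with $c\geq q$ as soon as $q\geq 1$). Iterating yields $\widetilde\rho_h(\gamma^n)(x)\geq x+nc$, so $f_H(\gamma)=\tau(\widetilde\rho_h(\gamma))\geq c$. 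For any $\delta\in\Lambda_\Sigma$ the displacement of $\widetilde\rho_h(\delta)$ is likewise bounded, so $\gamma^n\geq_{q,\Sigma}\delta$ for $n$ large; this establishes simultaneously that $\gamma$ is dominant and that $f_H$ sandwiches $\leq_{q,\Sigma}$. On the target side, $\preceq$ is Archimedean by definition, and its dominant cone $\check{G}^{++}$ is sandwiched by $f_G$ by the results recalled in Subsection~\ref{Subsec1.2}.

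For the direction $(2)\Rightarrow(1)$, Theorem~\ref{thm:thm1.5} applied to $\widetilde\rho\colon(\Lambda_\Sigma,\leq_{q,\Sigma})\to(\check G,\preceq)$ directly yields $\lambda>0$ with $f_G\circ\widetilde\rho=\lambda f_H$, which by \cite[Proposition~3.2]{BBHIW1} is precisely the statement that $\rho$ is weakly maximal with positive Toledo invariant. For $(1)\Rightarrow(2)$ we run the argument in reverse: \cite[Proposition~3.2]{BBHIW1} supplies the same identity $f_G\circ\widetilde\rho=\lambda f_H$ with $\lambda>0$, and it remains to select $q$ for which $\widetilde\rho$ becomes strictly order preserving. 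Let $C>0$ be the sandwich constant of the underlying continuous bi-invariant order on $\check G$ by $f_G$, so that $f_G(g)\geq C$ forces $g\in\check{G}^+$. For any integer $q\geq C/\lambda$ and any $\gamma>_{q,\Sigma}e$, the preparatory estimate gives $f_H(\gamma)\geq q$, hence $f_G(\widetilde\rho(\gamma))=\lambda f_H(\gamma)\geq \lambda q\geq C$, placing $\widetilde\rho(\gamma)$ in $\check{G}^+$ with strictly positive $f_G$-value and therefore in $\check{G}^{++}$; bi-invariance of all orders then promotes this to the required strict order-preservation of $\widetilde\rho$.

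The main obstacle I expect is the preparatory step, in particular verifying cleanly that $\leq_{q,\Sigma}$ is Archimedean and sandwiched by $\tau\circ\widetilde\rho_h$ despite the fact that the sandwiching quasimorphism itself depends on the auxiliary hyperbolization $h$; Theorem~\ref{thm:InvariantOrder} is needed to ensure that the order on the left-hand side is intrinsic, so that the construction is independent of $h$ up to renormalizing the constant $\lambda$. Once this compactness-on-a-fundamental-domain argument is in place, the rest is a mechanical transport of the quasimorphism identity across Theorem~\ref{thm:thm1.5} and \cite[Proposition~3.2]{BBHIW1}.
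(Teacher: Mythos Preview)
Your proposal is correct and follows essentially the same route as the paper: both arguments reduce to the quasimorphism identity $f_{\check G}\circ\widetilde\rho=\lambda f_\Sigma$ via Theorem~\ref{thm:thm1.5} in one direction and via the sandwich inclusion $\Nn_C(f_{\check G})\subset\check G^+$ in the other, with \cite[Proposition~3.2]{BBHIW1} supplying the link to weak maximality. The only difference is that the paper routes through the slightly stronger Theorem~\ref{thm:thm3.6}, which produces a \emph{uniform} $q_0$ independent of $\rho$ by invoking the rationality of the Toledo invariant \cite[Theorem~1.3]{BBHIW1}; your choice $q\geq C/\lambda$ depends on $\rho$, but this is all that Theorem~\ref{thm:thm1.6} requires. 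Your anticipated obstacle about the $h$-dependence of $f_H=\tau\circ\widetilde\rho_h$ is resolved by Lemma~\ref{lem:fundamental_class}, so the preparatory step is already available as Proposition~\ref{PropqSandwich}.
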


\begin{rem} 
\begin{enumerate}
\item Similarly, weakly maximal representations with negative Toledo invariant correspond to strictrly order reversing representations.
\item Recall from \cite{BBHIW1} that weakly maximal representations with nonzero Toledo invariant form a closed subset of the representation variety which consists entirely of discrete faithful representations. In view of Theorem \ref{thm:thm1.6}, the same is true for the class of order-preserving representations defined by (2) (and the corresponding class of order-reversing representations).
\item While continuous orderings on $\check G$ have been described rather explicitly in 
\cite{Olshanski}, what is lacking is an explicit description of the associated set of dominant elements (or, equivalently, the associated Archimedean orders).
\item The theorem remains true if the continuous order under consideration is replaced by any of its perturbations in the sense of Lemma \ref{Perturbation}, see Remark \ref{PerturbationStable}.
\end{enumerate}
\end{rem}

\subsection{Causal orderings}\label{sec:causal_intro}
In the special case when $G$ is a simple Lie group of Hermitian type whose associated bounded symmetric domain $\Dd$ is of tube type, a particularly nice bi-invariant order on $G$ can be defined 
using the causal structure on the Shilov boundary of $\Dd$. In this case the Shilov boundary $\cs$ has infinite cyclic fundamental group, and the action of $G$ on $\cs$ lifts to an effective action of $\check G$ on its universal cover 
$\check R$. 
By a classical result of Kaneyuki  \cite{Kaneyuki} there exists an (essentially) unique  $G$-invariant {\em causal structure} $\mathcal C$ on $\check S$, i.e. a 
family of closed proper convex cones with non-empty interior $\mathcal C_x \subset T_x \cs$, such that $g_* \mathcal C_x = \mathcal C_{gx}$ for all $g \in G$. 

The causal structure $ \mathcal C$ on $\cs$ lifts to a $\check G$-invariant causal structure $\widetilde{\mathcal C}$
on $\check R$, which is (up to taking $- \widetilde{\mathcal C}$) in fact the only $\check G$-invariant causal structures on $\check R$.  
This causal structure can be used to make the following definition. For $x,y \in \check R$ we say $x\leq y$ if there exists a causal curve (see Definition~\ref{defi:causal curve}) in $\check R$ from $x$ to $y$.  We write $x<y$ if $x \leq y$ and  $x\neq y$.

It turns out that $\leq$ defines a partial order on $\check R$ (see Lemma~\ref{lem:orderinR}), and hence gives rise to a bi-invariant partial order on $\check G$ by setting
\bqn
g \preceq h\,\,\text{ if and only if }\,\, gx \leq hx\,\, 
\text { for all }x\in\check R\,.
\eqn
We refer to this order as the \emph{causal order} on $\check G$ and to the corresponding Archimedean order as the \emph{causal Archimedean order}. 
The dominants of these orders can be described as follows:

\begin{thm}\label{thm:thm1.8} The causal order on $\check G$ is sandwiched by the quasimorphism $f_{\check G}$ and its dominant elements admit the following description:
\bqn
G^{++}=\{g\in\check G\colon gx>x\text{ for all }x\in\check R\}\,.
\eqn
\end{thm}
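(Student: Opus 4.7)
The plan is to realize $f_{\check G}$ as a translation-number-style quasimorphism for the action of $\check G$ on $\check R$, and then invoke the general principle recalled in Subsection~\ref{Subsec1.2}: if a bi-invariant order on a group has positive cone $G^{+}$ sandwiched by a homogeneous quasimorphism $f$, then the dominant set is $G^{++}=\{g\in G^{+}\colon f(g)>0\}$.

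Concretely, I would fix $x_{0}\in\check R$, and choose a continuous $\ZZ$-equivariant height function $h\colon\check R\to\RR$ that strictly respects the causal order on $\check R$, where $\ZZ$ is the deck group of $\check R\to\cs$ generated by a central element $z\in\check G$ acting causally positively. Define
\[
\tau(g):=\lim_{n\to\infty}\tfrac{1}{n}\bigl(h(g^{n}x_{0})-h(x_{0})\bigr).
\]
One verifies that $\tau$ is a continuous homogeneous quasimorphism on $\check G$ independent of $x_{0}$, using $\ZZ$-equivariance of the $\check G$-action on $\check R$ and the bounded $h$-variation on fundamental causal intervals $[x,zx]$ (which project to the compact quotient $\cs$). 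Essential uniqueness of continuous homogeneous quasimorphisms on $\check G$ then gives $\tau=\lambda f_{\check G}$, with $\lambda>0$ from evaluation on $z$, and I normalize so that $\tau=f_{\check G}$.

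For the sandwiching claim I would show that if $g\in\check G$ is not causally positive, i.e.\ $gx_{0}\not\geq x_{0}$ for some $x_{0}$, then the orbit $(g^{n}x_{0})$ cannot escape upward: combining $\ZZ$-equivariance with the bounded $h$-variation on causal intervals yields $h(g^{n}x_{0})-h(x_{0})\leq C$ uniformly in $n$, hence $f_{\check G}(g)\leq C$. Thus any $g$ with $f_{\check G}(g)>C$ lies in $\check G^{+}$. Given sandwiching, the general principle reduces the dominants description to proving, for $g\in\check G^{+}$, the equivalence
\[
f_{\check G}(g)>0\iff gx>x\text{ for all }x\in\check R.
\]
For $\Leftarrow$: if $gx>x$ strictly for every $x$, then $\phi(x):=h(gx)-h(x)$ descends by $\ZZ$-equivariance to a continuous strictly positive function on the compact quotient $\cs$, hence is bounded below by some $\epsilon>0$, giving $\tau(g)\geq\epsilon>0$. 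For the contrapositive of $\Rightarrow$: if $g$ fixes some $y\in\check R$, then $g^{n}y=y$ for all $n$, and computing $\tau(g)$ with base point $y$ (using base-point independence) gives $\tau(g)=0$.

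The principal obstacle is constructing a height function $h$ that strictly respects the causal order on $\check R$ — meaning $x<y\Rightarrow h(x)<h(y)$ — and verifying the quasimorphism defect bound for $\tau$. Strict monotonicity of $h$ is what drives the $\Leftarrow$ direction of the final equivalence, while the defect estimate relies on uniform boundedness of the $h$-oscillation over the fundamental causal intervals, which follows from compactness of $\cs$ and continuity of the causal structure but must be set up carefully so that $\check G$-equivariance of the causal order is fully exploited throughout.
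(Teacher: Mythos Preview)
Your overall strategy matches the paper's: build a translation-number quasimorphism from a height on $\check R$, identify it with $f_{\check G}$ by uniqueness, and read off the dominants via Lemma~\ref{lem:sandwich}. Your treatment of the final equivalence $f_{\check G}(g)>0\iff gx>x$ for $g\in\check G^{+}$ is also correct.

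The genuine gap is the sandwiching step. Your claim that $gx_{0}\not\geq x_{0}$ for some $x_{0}$ forces $h(g^{n}x_{0})-h(x_{0})\leq C$ for all $n$ is false: such a uniform bound would give $\tau(g)\leq 0$, whereas sandwiching only asserts (and only needs) $\tau(g)\leq C$ for some fixed positive $C$. Once $\dim\check R>1$ the causal order is genuinely partial, so $gx_{0}$ may be \emph{incomparable} to $x_{0}$ while sitting at greater height, and then nothing in $\ZZ$-equivariance or bounded $h$-variation on $[x,zx]$ prevents $h(g^{n}x_{0})$ from growing linearly. The missing geometric input is a \emph{bounded causal diameter} lemma: any two points of the compact quotient are joined by a causal curve of length $\leq D$. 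The paper exploits this by introducing, alongside the height $\zeta$ (your $h$), the $\check G$-invariant integer function $\iota(x,y):=\min\{n\in\ZZ:Z^{n}y\geq x\}$, proving $|L\,\iota(x,y)-(\zeta(x)-\zeta(y))|\leq D$, and then running both the defect estimate and the sandwiching through $R_{x}(g):=\iota(gx,x)$ rather than through $h$ directly. The $\check G$-invariance of $\iota$, which $h$ alone lacks, is precisely what makes both your acknowledged defect obstacle and the sandwiching gap disappear.
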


Thus, unlike the case of a general Hermitian group, 
we have in this case a quite precise description of the set of dominants. This allows us to deduce:

\begin{thm}\label{thm:thm1.9}  Let $G$ be a simple connected Lie group with finite center,
whose associated symmetric space is of tube type.  Then there exists $q=q(G)\in\NN$
such that the following are equivalent for a representation $\rho: \pi_1(\Sigma) \to G$.
\begin{enumerate}
\item $\rho$ is weakly maximal with positive Toledo invariant.
\item If $\widetilde\rho\colon\Lambda_\Sigma\to \check G$ denotes the lift of $\rho$ and $\gamma \in \Lambda_\Sigma$ satisfies
$\gamma>_{q,\Sigma} e$, then 
\[
\widetilde\rho(\gamma) x>x \text{ for all } x \in \check R.
\]
\end{enumerate}
\end{thm}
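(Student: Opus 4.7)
The plan is to specialize Theorem~\ref{thm:thm1.6} to the continuous bi-invariant causal order on $\check G$ from Subsection~\ref{sec:causal_intro} and then convert the resulting strict-order-preservation condition into the geometric one via Theorem~\ref{thm:thm1.8}. Writing $\preceq$ for the Archimedean order associated to the causal order, Theorem~\ref{thm:thm1.8} identifies its strictly positive cone as
\[
\check G^{++}=\{g\in\check G:\, gx>x\text{ for all }x\in\check R\}.
\]

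The core of the argument is a direct unpacking. By Theorem~\ref{thm:thm1.6} applied to $\preceq$, condition (1) is equivalent to the existence of some $q_0=q_0(\rho)\in\NN_0$ such that the lift $\widetilde\rho\colon(\Lambda_\Sigma,\leq_{q_0,\Sigma})\to(\check G,\preceq)$ is strictly order preserving. By bi-invariance of both orders, this is the single implication $\gamma>_{q_0,\Sigma}e\Rightarrow\widetilde\rho(\gamma)\in\check G^{++}$, which by the formula above is precisely condition (2) with $q=q_0$. For $q'\geq q_0$ the inequality $\gamma>_{q',\Sigma}e$ forces $\gamma>_{q_0,\Sigma}e$, so condition (2) at $q_0$ implies condition (2) for every larger $q$. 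In particular, $(2)\Rightarrow(1)$ holds for any $q\in\NN$, and $(1)\Rightarrow(2)$ is already established provided we permit $q$ to depend on $\rho$.

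The main obstacle is therefore to upgrade to a constant $q=q(G)$ that depends only on $G$ in the direction $(1)\Rightarrow(2)$. For this I would combine two ingredients. First, Theorem~\ref{thm:thm1.8} provides a sandwich constant $C=C(G)$ with $f_{\check G}^{-1}([C,\infty))\subset\check G^+$ for the causal cone, and the quasimorphism characterization of dominants recalled in Subsection~\ref{Subsec1.2} shows that $f_{\check G}(g)>0$ together with $g\in\check G^+$ places $g\in\check G^{++}$. Second, Theorem~\ref{thm:thm1.5} applied (after passing to Archimedean companions if necessary) yields $f_{\check G}\circ\widetilde\rho=\lambda f_\Sigma$, where $f_\Sigma$ is the translation-number quasimorphism on $\Lambda_\Sigma$ sandwiching the family $\{\leq_{q,\Sigma}\}$; in particular $\gamma>_{q,\Sigma}e$ forces $f_\Sigma(\gamma)\geq q$, so any $q>C/\lambda$ suffices to place $\widetilde\rho(\gamma)$ in $\check G^{++}$. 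The genuine remaining step — and the technical heart of the proof — is to bound $\lambda$ from below uniformly in $\rho$. Since $\lambda$ is proportional (with a $G$-dependent factor) to the Toledo invariant of $\rho$, one must extract from \cite[Proposition~3.2]{BBHIW1} a $G$-dependent lower bound on the Toledo invariant over the class of weakly maximal representations with positive Toledo. Once such a constant $\lambda_{\min}=\lambda_{\min}(G)>0$ is in hand, setting $q(G):=\lceil C(G)/\lambda_{\min}(G)\rceil+1$ completes the argument.
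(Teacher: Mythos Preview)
Your approach is essentially the same as the paper's, which packages the argument through Theorem~\ref{thm:thm3.6}: once the causal Archimedean order is shown to be sandwiched by $f_{\check G}$ (Proposition~\ref{prop:sandwich}(3) and Corollary~\ref{QMUnique}) and its dominants are identified (Proposition~\ref{lem:dominant}, i.e.\ Theorem~\ref{thm:thm1.8}), Theorem~\ref{thm:thm3.6} supplies a $q_0$ depending only on $G$ and the order, and Proposition~\ref{prop:prop3.7} links condition (3) there to weak maximality. You have unpacked exactly this.

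There is one imprecision in your final step that you should fix. You write that $\lambda$ is proportional to the Toledo invariant with a $G$-dependent factor and then seek a $G$-dependent lower bound on $\T(\rho)$. In fact the proportionality constant is $|\chi(\Sigma)|^{-1}$, which depends on $\Sigma$, and there is no uniform lower bound on $\T(\rho)$ itself. What the paper uses instead (inside the proof of Theorem~\ref{thm:thm3.6}) is the rationality result \cite[Theorem~1.3]{BBHIW1}: there is an integer $\ell_G$ depending only on $G$ with $\ell_G\,\T(\rho)\in|\chi(\Sigma)|\ZZ$ for every weakly maximal $\rho$, hence $\lambda\ell_G\in\ZZ$ and so $\lambda\geq 1/\ell_G$ when $\lambda>0$. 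With this correction your choice $q(G):=C(G)\ell_G$ (rather than $\lceil C/\lambda_{\min}\rceil+1$ with an unspecified $\lambda_{\min}$) gives $f_{\check G}(\widetilde\rho(\gamma))=\lambda f_\Sigma(\gamma)>\lambda q\geq C$, hence $\widetilde\rho(\gamma)\in\check G^{++}$, and the proof goes through exactly as you outlined. The citation should be \cite[Theorem~1.3]{BBHIW1} rather than Proposition~3.2, which gives the identity $\lambda=|\chi(\Sigma)|^{-1}\T(\rho)$ but not the integrality.
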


\subsection{Organization of the article} This article is organized as follows: In Section \ref{Sec:Prelim} we collect various preliminary results concerning partial orders and their relations to quasimorphisms. The main result, Corollary \ref{cor:sandwich} ensures that if two quasimorphism sandwich the same partial order, then they are positive multiples of each other. Section \ref{SecGeneral} is the core section of the article. After a preliminary subsection concerning translation numbers we establish invariance of the ordered groups $(\Lambda_\Sigma, \leq_{q, \Sigma})$ (i.e. Theorem \ref{thm:InvariantOrder}) in Subsection \ref{SubsecOrderSurface}. The remaining results from Subsections \ref{sec:1.1} and \ref{Subsec1.2} are proved in Subsection \ref{SecMeta}. In fact, Theorem \ref{thm:thm1.6} is a consequence of a more general meta-theorem stated as Theorem \ref{thm:thm3.6}. Finally, Section \ref{sec:causal_order} is devoted to the study of the causal order and the proof of Theorems \ref{thm:thm1.8} and \ref{thm:thm1.9}. 

\section{Orders and quasimorphisms}\label{Sec:Prelim} 
\subsection{Partial orders and Archimedean orders }\label{Sec:dorder}
In this subsection we review some facts about partially ordered groups and set up some notation

\begin{defi}\label{BasicDefsOrders} Let G be a group.
A {\em bi-invariant (partial) order} on $G$ is a relation $\preceq$ on $G\times G$ such that the following hold for all $g,h,k \in G$:
\be
\item $g \preceq g$;
\item $g\preceq h$ and $h\preceq g$ implies that $g=h$;
\item $g\preceq h$ and $h\preceq k$ implies that $g\preceq k$, and
\item if $g\preceq h$ then $agb\preceq ahb$ for all $a,b\in G$.
\ee
The {\em order semigroup} $G^+$ is defined as
\bqn
\gp=\{g\in G\colon\,g\succeq e\},
\eqn 
\end{defi}
Since $g \preceq h$ if and only if $g^{-1}h \in \gp$, the order $\preceq$ is uniquely determined by $\gp$. Order semigroups of bi-invariant partial orders are precisely the conjugation-invariant pointed submonoids of $G$ (where pointed means that $\gp\cap(\gp)^{-1}=(e)$), cf. \cite{BenSimon_Hartnick_Comm}.

 In \cite{Eliashberg_Polterovich} Eliashberg and Polterovich introduced the following subset of $\gp$:
\bqn
G^{++}:=\{g\in G^+\colon\text{ for every }h\in G\text{ there exists }p\geq1\text{ with }g^p\succeq h\}\,,
\eqn
whose elements they referred to as \emph{dominant elements} of the underlying order.
\begin{lemma}\label{lem:properties}  
The set $G^{++}$ is a strictly pointed, conjugation-invariant two-sided ideal in $G^+$, i.e.
\be
\item $G^{++}\cap(G^{++})^{-1}=\emptyset$;
\item $gG^{++}g^{-1} = G^{++}$ for all $g \in G$;
\item $G^{++}\cdot G^+=G^+\cdot G^{++}=G^{++}$.
\ee
In particular, $\{e\} \cup G^{++}$ is an order semigroup contained in $G^+$.
\end{lemma}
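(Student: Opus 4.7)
My plan is to verify the three assertions in turn, relying throughout on the elementary consequence of bi-invariance that inequalities can be multiplied: if $a \preceq b$ and $c \preceq d$ in $G$, then $ac \preceq bd$ (combine $ac \preceq bc$ with $bc \preceq bd$). I will tacitly assume $G$ is non-trivial, since otherwise $e$ itself is dominant and (1) fails vacuously; this assumption costs nothing, because in the trivial group the subsequent semigroup claim is still obvious.

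For (1), if $g \in G^{++} \cap (G^{++})^{-1}$, then $g, g^{-1} \in G^+$ and antisymmetry forces $g = e$; but $e \in G^{++}$ would say $h \preceq e$ for every $h \in G$, and applying this to both $h$ and $h^{-1}$ would combine with antisymmetry to collapse $G$ to $\{e\}$, contradicting our assumption. For (2), given $h \in G^{++}$ and $g \in G$, bi-invariance immediately yields $ghg^{-1} \succeq e$; for arbitrary $k \in G$ I would choose $p$ with $h^p \succeq g^{-1}kg$ (using dominance of $h$ on the conjugated element), so that $(ghg^{-1})^p = gh^pg^{-1} \succeq k$ after reconjugating.

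The main work is (3). The inclusions $G^{++} \subseteq G^{++}\cdot G^+$ and $G^{++} \subseteq G^+\cdot G^{++}$ are trivial since $e \in G^+$, so the content is showing that $g \in G^{++}$ and $h \in G^+$ imply $gh \in G^{++}$ (and the symmetric statement for $hg$). The crucial step---and really the only non-routine point of the lemma---is the inequality $(gh)^p \succeq g^p$ for all $p \geq 1$. I would prove it by induction: the base case is $gh \succeq g$, obtained from $h \succeq e$ by left-multiplication by $g$; for the inductive step I multiply $(gh)^{p-1} \succeq g^{p-1}$ and $gh \succeq g$ using the combining rule from the first paragraph. With this in hand, given any $k \in G$, I pick $p$ with $g^p \succeq k$ and conclude $(gh)^p \succeq g^p \succeq k$, so $gh \in G^{++}$. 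The argument for $hg$ is entirely symmetric.

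The final claim that $\{e\} \cup G^{++}$ is an order semigroup inside $G^+$ is then immediate: closure under multiplication is (3) together with the trivial cases when one factor equals $e$, conjugation-invariance is (2), and pointedness is (1). The main obstacle I anticipate is ensuring bi-invariance is used cleanly in the inductive comparison $(gh)^p \succeq g^p$; once this multiplicative lemma is isolated, the rest of the proof is bookkeeping.
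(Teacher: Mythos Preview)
Your proof is correct and follows essentially the same route as the paper's. The paper dismisses (1) and (2) as ``clear'' and for (3) establishes the same key inequality $(gh)^p \succeq g^p$ by the recurrence $(gh)^p = gh(gh)^{p-1} \succeq g(gh)^{p-1}$, which unwinds to exactly your inductive statement; your explicit multiplication-of-inequalities lemma and your careful handling of the trivial-group edge case in (1) are just more detailed exposition of the same argument.
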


\begin{proof}  Properties (1) and (2) are clear.  It follows from bi-invariance,  that
for every $h\in G^+$ and $p\geq1$,
\bqn
(gh)^p=gh(gh)^{p-1}\succeq g(gh)^{p-1}\,,
\eqn
and thus, by recurrence, $(gh)^p\succeq g^p$.  
This implies that $G^{++}\cdot G^+=G^{++}$, and the other identity is proved similarly.
\end{proof}
In view of the lemma we refer to $G^{++}$ as the \emph{dominant semigroup} associated with $\preceq$. 

It may happen that the dominant semigroup of a bi-invariant partial order is empty. It may also happen that two different bi-invariant partial orders have the same dominant semigroup. However, it follows from the lemma that if $G^{++}$ is the dominant set of some partial order $G^+$, then there is always a unique minimal order semigroup with dominant semigroup $G^{++}$, and this minimal order semigroup is given by $\{e\} \cup G^{++}$. 
Here is a reformulation of our Definition~\ref{defi:archimedean}:

\begin{defi}\label{defi:Archimedean_order}
A bi-invariant partial order on $G$ is a {\em Archimedean} if the order semigroup $\gp$ equals  $\{e\} \cup G^{++}$.  
\end{defi}

One main advantage of dominant semigroups as opposed to order semigroups is their functoriality. Namely, if $\rho\colon G \to H$ is a group homomorphism and $H^+<H$ is an order semigroup, then the pre-image $G^+ := \rho^{-1}(H^+)$ need not be an order semigroup. Indeed, the kernel of $\rho$ is contained in $G^+$, hence if $\rho$ is not injective then $G^+$ is not pointed. On the contrary we observe:
\begin{lemma}\label{dominantsFunctorial} If $H^{++}$ is the dominant semigroup of a bi-invariant partial order on a group $H$ and $\rho\colon G \to H$ a group homomorphism, then $G^{++} := \rho^{-1}(H^{++})$ is also a dominant semigroup.
\end{lemma}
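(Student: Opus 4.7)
The plan is to exhibit an explicit bi-invariant partial order on $G$ whose dominant semigroup coincides with $\rho^{-1}(H^{++})$, by taking the smallest candidate suggested by the paragraph preceding the lemma. Write $G^{++} := \rho^{-1}(H^{++})$ and set
\[
G^+ := \{e\} \cup G^{++}.
\]
I will first verify that $G^+$ is an order semigroup on $G$, and then show that its dominant semigroup is exactly $G^{++}$.

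For the first step, conjugation-invariance of $G^+$ is immediate from the conjugation-invariance of $H^{++}$ (Lemma \ref{lem:properties}(2)) transferred through $\rho$. Closure under multiplication uses the containment $H^{++} \cdot H^{++} \subset H^{++} \cdot H^+ = H^{++}$ supplied by Lemma \ref{lem:properties}(3), which pulls back under $\rho$. For pointedness, note first that $e \notin H^{++}$, since otherwise $e = e^{-1} \in H^{++} \cap (H^{++})^{-1}$ would violate Lemma \ref{lem:properties}(1); thus $e \notin G^{++}$. Moreover, if $g$ and $g^{-1}$ both lay in $G^{++}$, then $\rho(g)$ and $\rho(g)^{-1}$ would both lie in $H^{++}$, again contradicting Lemma \ref{lem:properties}(1).

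The substantive step is identifying the dominant semigroup of $G^+$ with $G^{++}$. For the inclusion $G^{++} \subseteq$ dominants, take $g \in G^{++}$ and $h \in G$. Applying the dominance of $\rho(g) \in H^{++}$ in $H$ yields an integer $p \geq 1$ with $\rho(g)^p \succeq \rho(h)$, meaning $\rho(g^p h^{-1}) \in H^+$. Multiplying this relation on the left by $\rho(g) \in H^{++}$ and invoking $H^{++} \cdot H^+ = H^{++}$ gives $\rho(g^{p+1} h^{-1}) \in H^{++}$, hence $g^{p+1} h^{-1} \in G^{++} \subset G^+$, so $g^{p+1} \succeq h$ in the order defined by $G^+$. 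For the reverse inclusion, any dominant element of $G^+$ lies in $G^+ = \{e\} \cup G^{++}$; and, excluding the degenerate case $G = \{e\}$, the identity $e$ cannot itself be dominant, as this would force every element of $G$ into $G^+$ in contradiction to pointedness.

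No major obstacle is anticipated. The single subtle point is the shift from exponent $p$ to $p+1$ in the forward direction of step two: the apparent candidate $g^p h^{-1}$ is only guaranteed to lie in $\rho^{-1}(H^+) = \rho^{-1}(\{e\} \cup H^{++})$, which may not be contained in $G^+$ when $\ker \rho$ is nontrivial, since nothing prevents $\rho(g^p h^{-1}) = e$ with $g^p h^{-1} \neq e$. After one additional multiplication by the dominant element $\rho(g)$, however, Lemma \ref{lem:properties}(3) forces the image into $H^{++}$ proper, which is precisely the preimage condition defining $G^{++}$.
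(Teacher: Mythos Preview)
Your proof is correct and follows essentially the same approach as the paper: define $G^+ := \{e\}\cup G^{++}$, verify it is an order semigroup, and check that every nontrivial element is dominant. You are in fact slightly more careful than the paper on the dominance step: the paper asserts directly that $h^n \succeq \rho(k)$ implies $\rho(g^n k^{-1}) \in H^{++}$, whereas a priori this only gives membership in $H^+$; your extra multiplication by $\rho(g)$ and appeal to $H^{++}\cdot H^+ = H^{++}$ (Lemma~\ref{lem:properties}(3)) closes this gap cleanly.
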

\begin{proof} Let $G^+ := \{e\} \cup G^{++}$. Then $G^+$ is a conjugation-invariant monoid, since the pullback of a conjugation-invariant semigroup is a conjugation invariant semigroup. To see that $G^+$ is pointed it suffices to observe that  \[G^{++} \cap (G^{++})^{-1} = \rho^{-1}(H^{++}\cap (H^{++})^{-1})=\emptyset.\]
Thus $G^+$ is an order semigroup and it remains to show that every non-trivial element in $G^+$ is a dominant. However, if $g \in G^{++}$ then $h:= \rho(g) \in H^{++}$. Thus for every $k \in G$ there exists $n \in \mathbb N$ such that $h^n \geq \rho(k)$, which implies that $\rho(g^nk^{-1}) \in H^{++}$ and thus $g^nk^{-1}\in G^{++}$, which is the desired dominance condition.
\end{proof}

Given a bi-invariant partial order on a group $H$ we might consider a perturbation of this order. 
\begin{lemma}\label{Perturbation}
If $\preceq$ is a bi-invariant partial order on $H$ with order semigroup $H^+$, then also 
\[
H^+_g := \{e\} \cup \bigcap_{h \in H}h(g H^+)h^{-1} < H^+
\]
is an order semigroup for every $g\in H^+$.
\end{lemma}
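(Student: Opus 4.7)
The plan is to verify the three properties characterizing an order semigroup: $H^+_g$ should be a conjugation-invariant pointed submonoid of $H$ (and moreover a subset of $H^+$).

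First I would show that $H^+_g \subset H^+$ and that it is a submonoid. Containment in $H^+$ is immediate: any non-identity $x \in H^+_g$ lies in $gH^+$ (take $h = e$ in the intersection), and $gH^+ \subset H^+$ because $g \in H^+$ and $H^+$ is multiplicatively closed. For the submonoid property, the key observation is that $(gH^+)(gH^+) \subset gH^+$: writing a typical product as $gk_1gk_2$ with $k_i \in H^+$, we have $k_1 g k_2 \in H^+$ since $H^+$ is closed under multiplication and contains $g$, so $gk_1gk_2 \in gH^+$. Closure of $H^+_g$ under multiplication then follows: if $x,y \in H^+_g \setminus\{e\}$ then $hxyh^{-1} = (hxh^{-1})(hyh^{-1}) \in (gH^+)(gH^+) \subset gH^+$ for every $h \in H$, so $xy$ lies in the intersection; if either factor is $e$ the product lies trivially in $H^+_g$. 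Conjugation-invariance of $H^+_g$ is built into the definition by intersecting over all $H$-conjugates of $gH^+$.

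The main step is pointedness, i.e. $H^+_g \cap (H^+_g)^{-1} = \{e\}$. Suppose $x$ and $x^{-1}$ both lie in $H^+_g \setminus\{e\}$. Taking $h = e$ in the intersection, both $x$ and $x^{-1}$ lie in $gH^+$. Since $g \in H^+$ and $H^+$ is a submonoid, $gH^+ \subset H^+$, hence $x,x^{-1} \in H^+$. Pointedness of the original order $H^+$ then forces $x = e$, contradicting the choice of $x$. This is the only place where pointedness of $H^+$ is used, and it is the heart of the argument; everything else is formal manipulation of intersections and of the hypothesis $g \in H^+$.

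I expect no serious obstacle: the statement is essentially a compatibility check between intersection, conjugation, and the monoid structure. The only subtlety is remembering that conjugation-invariance of $H^+$ (which follows from bi-invariance of $\preceq$, by setting $a = h$, $b = h^{-1}$ in condition (4) of Definition~\ref{BasicDefsOrders}) is what guarantees each set $h(gH^+)h^{-1}$ is contained in $H^+$, so that the intersection is also contained in $H^+$ as claimed by the notation $H^+_g < H^+$.
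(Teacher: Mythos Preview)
Your proof is correct and follows essentially the same approach as the paper. The only difference is packaging: the paper phrases the argument at a slightly higher level (noting that $\{e\}\cup gH^+$ is a pointed semigroup, and that pointed semigroups are stable under conjugation and intersection), whereas you unpack each of these closure properties explicitly; the underlying verifications are identical.
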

\begin{proof} Since $H^+$ is a pointed semigroup and $g\in H^+$, also $\{e\} \cup gH^+$ is a pointed semigroup. The conjugate of a pointed semigroup is a pointed semigroup, and the intersection of pointed semigroups is a pointed semigroup.  It thus follows that $H^+_g$ is a pointed semigroup. Moreover, $H^+_g$ is conjugation-invariant by construction, hence an order semigroup. 
\end{proof}
We call $H^+_g$ a \emph{perturbation} of $H^+$. 

\begin{rem}  The order $\leq_q$ on $\homr$ is a perturbation of $\leq$, where $g$ is the translation $x\mapsto x+q$.
\end{rem}

\subsection{Quasimorphisms and sandwiched orders}\label{Sec:qmsw}

In this section we present the basic facts and definition concerning
some bi-invariant orders on groups, which are ``sandwiched"
by a homogeneous quasimorphism.  The main point is a theorem
that reconstructs the quasimorphism (up to a multiplicative constant)
from the knowledge of the order.  This theorem was established in 
\cite{BenSimon_Hartnick_Comm, BenSimon_Hartnick_JOLT}. Since we later make use of some of the constructions, 
we give here a simpler account of this result.

%

Given a real-valued function $f\colon G\to \RR$ we denote by $df\colon G^2 \to \RR$ the function
\[
df(g,h) := f(h) - f(gh) + f(g).
\]
A function $f$ is a \emph{quasimorphism} if $df$ is bounded, in which case the supremum 
\bqn
\|df\|_\infty:=\sup_{g,h\in G}|f(gh)-f(g)-f(h)|\,.
\eqn
is called the \emph{defect} of $f$. A quasimorphism is called \emph{homogeneous} if $f(g^n) = n \cdot f(g)$ for all $g \in G$ and $n \in \ZZ$. Every homogeneous quasimorphism is automatically invariant under conjugation.

Given a quasimorphism $f$, the function $df$ can be seen as a $2$-cocycle in the inhomogeneous bar resolution of the bounded cohomology of $G$, and thus gives rise to a class $[df]$ in the second bounded cohomology $H^2_b(G; \RR)$ of $G$. If $G$ is locally compact and $f$ is Borel, then $f$ is automatically continuous and thus gives rise to a class $[df]$ in the second \emph{continuous} bounded cohomology  $H^2_{cb}(G; \RR)$ of $G$ (see \cite{Burger_Iozzi_Wienhard_tol}). We will use this relation to bounded cohomology later on. For now, let us explain how homogeneous quasimorphisms can be used to construct order semigroups.  

\begin{exo}\label{ExQm}
Let $f\colon G\to\RR$ be a homogeneous quasimorphism with defect 
$\|df\|_\infty$. For every $C\in\RR$, let us define
\bqn
\Nn_C(f):=\{g\in G\colon\,f(g)\geq C\}\,.
\eqn
Then for every $C\geq\|df\|_\infty$, 
the set $\Nn_C(f)\cup\{e\}$ is a conjugacy invariant
submonoid. If moreover $C>0$, then this semigroup is pointed, hence an order semigroup. 
\end{exo}

We are interested in bi-invariant partial orders whose order semigroup can be approximated by sets of the form $\Nn_C(f)$ in the following sense.
\begin{defi} Let $G$ be a group and $f\colon G \to \RR$ be a homogeneous quasimorphism. An order on  $G$ is  \emph{sandwiched} 
(or {\em $C$-sandwiched}) by $f$ if there exists a constant $C\in \RR$ such that
\[
\Nn_C(f) \subset G^+\,.
\]
A bi-invariant partial order is sandwiched by $f$, if its order semigroup is sandwiched by $f$.
\end{defi}
This terminology is justified by the following:

\begin{lemma}\label{lem:gp}  If an order on a group $G$ is sandwiched by a quasimorphim  $f\colon G\to\RR$, then
\bqn
G^+\subset\Nn_0(f)\,.
\eqn
\end{lemma}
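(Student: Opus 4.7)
The plan is to argue by contradiction using homogeneity of $f$ together with the pointedness of the order semigroup $G^+$. So suppose some $g \in G^+$ satisfies $f(g) < 0$; I want to derive a contradiction.

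First I would exploit homogeneity: since $f(g^{n}) = n f(g)$, the values $f(g^{-n}) = -n f(g)$ grow linearly to $+\infty$. Therefore, no matter what sandwiching constant $C$ is given, we can choose $n$ so large that
\[
f(g^{-n}) \;=\; -n f(g) \;\geq\; C,
\]
which by the sandwich hypothesis $\Nn_C(f) \subset G^+$ forces $g^{-n} \in G^+$.

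Next I would combine this with the fact that $G^+$ is a monoid containing $g$, so $g^n \in G^+$ as well. Pointedness of an order semigroup, $G^+ \cap (G^+)^{-1} = \{e\}$, then forces $g^n = e$. But this is incompatible with $f(g) < 0$: applying homogeneity once more gives $f(g) = f(g^n)/n = 0$, contradicting the assumption $f(g) < 0$. Hence every $g \in G^+$ satisfies $f(g) \geq 0$, i.e.\ $G^+ \subset \Nn_0(f)$.

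The only step that requires any care is the choice of $C$; the sign of $C$ plays no role in the argument, since homogeneity lets us push $f(g^{-n})$ above any real threshold. Thus the result is essentially immediate once one pairs homogeneity with pointedness, and there is no genuine obstacle.
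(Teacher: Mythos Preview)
Your proof is correct and follows essentially the same line as the paper's: assume $f(g)<0$, use homogeneity to force $g^{-n}\in\Nn_C(f)\subset G^+$ for large $n$, then combine $g^n\in G^+$ with pointedness to get $g^n=e$ and hence $f(g)=0$, a contradiction. The only cosmetic difference is that the paper begins by disposing of the case $f(g)=0$ before assuming $f(g)<0$, whereas you go directly to the contradiction; the content is identical.
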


\begin{proof} We need to show that if $g\in G^+$, then $f(g)\geq0$.
We may assume that $f(g)\neq0$, otherwise we are done.
Since $f$ is homogeneous, for $p\in\NN$ large enough
we have that $|f(g^p)|=|pf(g)|\geq C$.  If $f(g)$ were to be negative,
then $f(g^{-p})=-pf(g)\geq C\geq0$, and hence by hypothesis
$g^{-p}\in G^+$.  Since $G^+$ is a semigroup, it would contain 
both $g^p$ and $g^{-p}$, whence $g^p \in G^+ \cap (G^+)^{-1}=\{e\}$ and hence $f(g) = \frac 1 p f(g^p) = 0$, a contradiction.
\end{proof}
Note that the lemma applies in particular to dominant semigroups.
%
The following observation was made in \cite[Lemma 2.10]{BenSimon_Hartnick_JOLT}:
\begin{lemma}\label{lem:sandwich}  Assume that an order semigroup $G^+$ is sandwiched by a homogeneous quasimorphism $f$.
Then 
\bqn
G^{++}=\{g\in G^+\colon\,f(g)>0\}\,.
\eqn
In particular, if a homogeneous quasimorphism sandwiches $G^+$ then it also sandwiches its dominant set and the corresponding Archimedean order.
\end{lemma}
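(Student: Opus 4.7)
The plan is to establish $G^{++} = \{g \in G^+ : f(g) > 0\}$ by two inclusions, exploiting the sandwich $\Nn_C(f) \subset G^+$ in one direction and the companion inclusion $G^+ \subset \Nn_0(f)$ from Lemma~\ref{lem:gp} in the other; the ``in particular'' assertion will then come from enlarging the sandwich constant.

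For the inclusion $(\supseteq)$, I would fix $g \in G^+$ with $f(g) > 0$ and any $h \in G$, and apply the defect bound together with homogeneity of $f$ to the product $h^{-1}g^n$, obtaining
\[
f(h^{-1}g^n) \;\geq\; -f(h) + n f(g) - \|df\|_\infty.
\]
Since $f(g) > 0$, the right-hand side exceeds $C$ for all sufficiently large $n$, so the sandwich hypothesis yields $h^{-1}g^n \in \Nn_C(f) \subset G^+$, i.e.\ $g^n \succeq h$. This shows $g \in G^{++}$.

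For the reverse inclusion, I would fix $g \in G^{++}$; Lemma~\ref{lem:gp} already forces $f(g) \geq 0$, and the main task is to exclude equality. Suppose $f(g) = 0$. For an arbitrary $h \in G$, dominance supplies $n$ with $h^{-1}g^n \in G^+$, hence $f(h^{-1}g^n) \geq 0$ by Lemma~\ref{lem:gp} applied once more; combining this with the matching upper bound $f(h^{-1}g^n) \leq -f(h) + \|df\|_\infty$ (from the defect estimate together with $f(g)=0$) gives $f(h) \leq \|df\|_\infty$ uniformly in $h$. Applying this bound to $h^m$ and using homogeneity $f(h^m) = m f(h)$ then forces $f(h) \leq 0$ for every $h$; symmetry under $h \mapsto h^{-1}$ produces $f(h) \geq 0$, whence $f \equiv 0$, contradicting the nontriviality of the quasimorphism entering the sandwich.

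Finally, for the ``in particular'' statement I would pick any $C' > \max(C,0)$: every $g \in \Nn_{C'}(f)$ then lies in $\Nn_C(f) \subset G^+$ and satisfies $f(g) \geq C' > 0$, so by the already-proved $(\supseteq)$ it lies in $G^{++}$, giving the sandwich $\Nn_{C'}(f) \subset \{e\} \cup G^{++}$ for both $G^{++}$ and the associated Archimedean order. The hard part is the reverse inclusion, specifically excluding $f(g) = 0$ for a dominant $g$: this forces one to exploit dominance applied to both $h$ and $h^{-1}$ and then to leverage homogeneity to amplify a uniform pointwise bound on $f$ into the strong conclusion $f \equiv 0$.
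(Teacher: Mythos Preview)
Your argument is correct. The inclusion $(\supseteq)$ and the ``in particular'' clause match the paper's proof essentially verbatim. For the inclusion $(\subseteq)$ the paper takes a more direct route: rather than arguing by contradiction, it simply picks a single element $h$ with $f(h) > \|df\|_\infty$, applies dominance of $g$ once to obtain $g^p \succeq h$, and reads off $p\,f(g) \geq f(g^p h^{-1}) + f(h) - \|df\|_\infty > 0$ from the defect inequality together with Lemma~\ref{lem:gp}. Your contradiction argument (deducing $f \equiv 0$ from $f(g)=0$) is slightly longer but has the virtue of making explicit the hypothesis that $f$ is nontrivial, which the paper's choice of $h$ uses silently. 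One minor remark: since the sandwich constant $C$ is already strictly positive by definition, there is no need to pass to $C' > \max(C,0)$; the original $C$ already gives $\Nn_C(f) \subset G^{++}$.
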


\begin{proof}  The quasimorphism property implies that for all $g,h\in G$ and for all $p\geq1$
\bqn
pf(g)\geq f(g^ph^{-1})+f(h)-\|df\|_\infty\,.
\eqn
Choose $h\in G$ such that $f(h)\geq\|df\|_\infty$.
If $g\in G^{++}$, there exists $p\geq1$ such that $g^p\succeq h$.  
Hence $g^ph^{-1}\succeq e$ and, by Lemma~\ref{lem:gp}, $f(g^ph^{-1})\geq0$.
Thus $f(g)>0$ and $G^{++}\subseteq\{g\in G^+\colon\,f(g)>0\}$.

Conversely, to see that $\{g\in G^+\colon\,f(g)>0\}\subseteq G^{++}$,
we need to show that if $g\in G^+$ with $f(g)>0$,
then, given $h\in G$, there exists $p\geq1$ such that $g^p\succeq h$ or,
equivalently, $g^ph^{-1}\succeq e$.  Because $G^+$ is sandwiched
by $f$, and hence $\Nn_C(f)\subset \gp$ for some $C\geq0$,
it will be enough to show that $f(g^ph^{-1})\geq C$.
But the quasimorphism property implies that
\bqn
f(g^ph^{-1})\geq pf(g)-f(h) -\|df\|_\infty\,,
\eqn
and, since $f(g)>0$, there exists $p\geq1$ such that 
\bqn
pf(g)-f(h) -\|df\|_\infty\geq C\,.
\eqn
\end{proof}
It is immediate from the definitions that for dominant semigroups, sandwiching by quasimorphisms is functorial. We record the relevant functoriality property for later reference:
\begin{lemma}\label{SandwichFunctorial} Let $f\colon H \to \RR$ be a homogeneous quasimorphism and $H^{++}<H$ be a dominant semigroup which is 
sandwiched by $f$. Then for every group homomorphism $\rho\colon G \to H$ the pre image $\rho^{-1}(G^{++})$ is sandwiched by $f \circ \rho$.
\end{lemma}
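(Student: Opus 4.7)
The plan is to unwind the definitions and combine them with Lemma~\ref{dominantsFunctorial}, which already supplies the dominant semigroup structure on the $G$-side. The statement is essentially a bookkeeping exercise, so the main task is to verify that the sandwich inequality pulls back along $\rho$ without spoiling pointedness at the identity.

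First I would observe that $f \circ \rho\colon G \to \RR$ is again a homogeneous quasimorphism: homogeneity is immediate from $\rho(g^n)=\rho(g)^n$, and its defect satisfies $\|d(f\circ\rho)\|_\infty \leq \|df\|_\infty$ since $df$ evaluated on $(\rho(g),\rho(h))$ controls $d(f\circ\rho)(g,h)$. In particular $(f\circ\rho)(e)=0$, which will be needed in the final step.

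Next, by Lemma~\ref{dominantsFunctorial}, the set $G^{++} := \rho^{-1}(H^{++})$ is a dominant semigroup in $G$, whose associated order semigroup is $G^+ = \{e\} \cup G^{++}$. By hypothesis, $H^{++}$ is sandwiched by $f$, so there exists a constant $C$ with $\Nn_C(f) \subset \{e\} \cup H^{++}$. Replacing $C$ by $\max(C,1)$ if necessary (which only shrinks $\Nn_C(f)$), I may assume $C>0$.

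Finally, I would verify that $\Nn_C(f\circ\rho) \subset G^+$: if $g \in G$ satisfies $(f\circ\rho)(g) \geq C > 0$, then $\rho(g) \in \Nn_C(f) \subset \{e\} \cup H^{++}$; since $f(e)=0 < C$, necessarily $\rho(g) \neq e$, so $\rho(g) \in H^{++}$ and therefore $g \in \rho^{-1}(H^{++}) = G^{++} \subset G^+$. This is the required sandwich inclusion. There is no real obstacle here; the only subtle point is remembering to take $C>0$ so that elements of $\ker\rho$ (which lie in $\Nn_0(f\circ\rho)$ but not a priori in $G^+$) are excluded from $\Nn_C(f\circ\rho)$ via the homogeneity of~$f$.
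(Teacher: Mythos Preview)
Your proposal is correct. The paper does not actually supply a proof of this lemma; it is introduced by the sentence ``It is immediate from the definitions that for dominant semigroups, sandwiching by quasimorphisms is functorial,'' and then stated without further argument. Your write-up fills in exactly the details one would expect, and your care about taking $C>0$ to exclude nontrivial kernel elements (so that $\Nn_C(f\circ\rho)$ lands in $\{e\}\cup G^{++}$ rather than merely in $\ker\rho\cup G^{++}$) is a genuine point that the paper glosses over.
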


Our next goal is to show that a quasimorphism can be reconstructed, up to a multiplicative constant, from any order it sandwiches. Thus let us fix a bi-invariant partial order $\leq$ on a group $G$ and let us assume that $G^{++} \neq \emptyset$. Let now $g\in G^{++}$, and $h\in G$.  For every $n\geq1$, the set
\bqn
E_n(g,h):=\{p\in\ZZ\colon\, g^p\succeq h^n\}\,.
\eqn
By definition of dominants, the set $E_n(g,h)$ is not empty.  
Moreover it is easy to see that it is bounded below:
in fact,  if $g^{-k}\succeq h^n$ for all $k>0$, then also
$g^k\preceq h^{-n}$ for all $k>0$,
contradicting the fact that $g$ is a Archimedean. 
We may thus define 
\[e_n(g,h):=\min E_n(g,h).\]
Since $g^{k+1}\succeq g^k$ for all $k\in \ZZ$,  then
\bqn
E_n(g,h)=\big[e_n(g,h),\infty\big)\cap\ZZ.
\eqn
Observe that if $g^{p_1}\succeq h^n$ and $g^{p_2}\succeq h^m$,
then
$g^{p_1+p_2}\succeq h^{n+m}$.  Thus
\bqn
e_{n+m}(g,h)\leq e_n(g,h)+e_m(g,h)
\eqn
and hence the limit
\bqn
e(g,h):=\lim_{n\to\infty}\frac{e_n(g,h)}{n}
\eqn
exists. In \cite{Eliashberg_Polterovich} the function $e\colon G^{++} \times G \to \RR$ was referred to as the \emph{relative growth function} of the order $\leq$.
The idea that the function $e$ can be used to reconstruct quasimorphisms from an associated dominant set 
was developed in  \cite{BenSimon_Hartnick_Comm}, 
where in particular the following reconstruction theorem was first established:
\begin{thm}\label{thm:growth function}
Let $G$ be a group and $\gp < G$ an order semigroup.
If $\gp$ is $C$-sandwiched by a homogeneous quasimorphism $f$,
then for every $n\in\NN$,
\bq\label{eq:estimate}
       \frac{-\|df\|_\infty}{nf(g)}
\leq\frac{e_n(g,h)}{n}-\frac{f(h)}{f(g)}
\leq\frac{\|df\|_\infty+C+f(g)}{nf(g)}\,,
\eq
for every $g\in G^{++}$ and $h\in G$.
\end{thm}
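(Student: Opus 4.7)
The plan is to exploit the minimality of $p := e_n(g,h)$ together with the two-sided sandwiching $\Nn_C(f) \subseteq G^+ \subseteq \Nn_0(f)$, the second inclusion being Lemma~\ref{lem:gp} and the first being the sandwiching hypothesis. Note that $f(g) > 0$ by Lemma~\ref{lem:sandwich}, so dividing by $f(g)$ is legitimate throughout.

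First, I would record the two membership facts forced by minimality: since $p = e_n(g,h)$, the element $g^p h^{-n}$ lies in $G^+$, whereas $g^{p-1} h^{-n}$ does not. Applying $G^+ \subseteq \Nn_0(f)$ to the first gives $f(g^p h^{-n}) \geq 0$, and taking the contrapositive of $\Nn_C(f) \subseteq G^+$ applied to the second gives $f(g^{p-1} h^{-n}) < C$. These are the two numerical inequalities driving the whole argument.

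Second, I would convert each into an estimate on $p/n - f(h)/f(g)$ via the defect inequality and homogeneity. Using $f(g^k) = k f(g)$ and $f(h^{-n}) = -n f(h)$, a single application of the quasimorphism bound yields
\bqn
\bigl| f(g^k h^{-n}) - k f(g) + n f(h) \bigr| \leq \|df\|_\infty \quad \text{for every } k \in \ZZ.
\eqn
Taking $k = p$ and combining with $f(g^p h^{-n}) \geq 0$ gives $p f(g) - n f(h) \geq -\|df\|_\infty$; dividing by $n f(g)$ is the desired lower bound. Taking $k = p-1$ and combining with $f(g^{p-1} h^{-n}) < C$ gives $(p-1) f(g) - n f(h) < C + \|df\|_\infty$, and rearranging to isolate $p f(g) - n f(h)$ moves an extra $f(g)$ to the right-hand side, producing the upper bound once we divide by $n f(g)$.

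There is no serious obstacle: the essential content is that the minimality of $p$ produces a matched pair of inequalities, one saying $g^p h^{-n}$ is nonnegative for $f$ and the other saying $g^{p-1}h^{-n}$ is bounded above by the sandwiching constant, and the defect inequality then linearizes both into comparable statements about $p f(g) - n f(h)$. The only point worth verifying carefully is the extension of the homogeneity relation $f(x^m) = m f(x)$ from $m \in \NN$ to $m \in \ZZ$, which is immediate from the definition of homogeneous quasimorphism and ensures the identity $f(h^{-n}) = -n f(h)$ used above.
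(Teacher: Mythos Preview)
Your proposal is correct and follows essentially the same approach as the paper. The lower bound argument is identical; for the upper bound the paper phrases the same step in its direct form (every integer $p$ with $f(g^p h^{-n}) \geq C$ must satisfy $p \geq e_n(g,h)$, hence $e_n(g,h)-1 \leq \frac{nf(h)+C+\|df\|_\infty}{f(g)}$), whereas you use the equivalent contrapositive applied to $p-1$, which is arguably a touch cleaner.
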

We include a proof of Theorem \ref{thm:growth function} for completeness' sake. However,
before we turn to the proof we draw some immediate consequences:
\begin{cor}\label{cor:sandwich} Let $G$ be a group and $\gp < G$ an order semigroup.
\be
\item If $\gp$ is sandwiched by a homogeneous quasimorphism $f$, then 
\bqn
e(g,h)=\frac{f(h)}{f(g)}
\eqn
for all $g\in G^{++}$ and $h\in G$.
\item If $\gp$ is sandwiched by  homogeneous quasimorphisms $f_1, f_2$,
there is $\lambda>0$ such that $f_2=\lambda f_1$.
\ee
\end{cor}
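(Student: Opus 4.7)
The plan is to derive both statements of Corollary~\ref{cor:sandwich} as straightforward consequences of the quantitative estimate (\ref{eq:estimate}) furnished by Theorem~\ref{thm:growth function}; no additional ingredient beyond passing to the limit and comparing ratios is required.

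For part~(1), I would fix $g \in \gpp$ and $h \in G$. Since $\gp$ is sandwiched by $f$, Lemma~\ref{lem:sandwich} identifies $\gpp$ with $\{x \in \gp : f(x) > 0\}$, so in particular $f(g) > 0$ and all fractions appearing in (\ref{eq:estimate}) are well-defined. Letting $n \to \infty$, both the lower bound $-\|df\|_\infty /(n f(g))$ and the upper bound $(\|df\|_\infty + C + f(g))/(n f(g))$ tend to $0$; hence $e_n(g,h)/n - f(h)/f(g) \to 0$. Since $e(g,h)$ is by definition $\lim_{n} e_n(g,h)/n$, this yields $e(g,h) = f(h)/f(g)$.

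For part~(2), assume $\gp$ is sandwiched by both $f_1$ and $f_2$. The statement is vacuous unless $\gpp \neq \emptyset$; assuming this, Lemma~\ref{lem:sandwich} guarantees $f_i(g) > 0$ for every $g \in \gpp$ and $i=1,2$. Fix any $g_0 \in \gpp$ and apply part~(1) to each $f_i$ to obtain
\[
\frac{f_1(h)}{f_1(g_0)} \;=\; e(g_0, h) \;=\; \frac{f_2(h)}{f_2(g_0)}
\qquad \text{for every } h \in G.
\]
Setting $\lambda := f_2(g_0)/f_1(g_0) > 0$ then gives the identity $f_2 = \lambda f_1$ on all of $G$, and independence of $\lambda$ from the choice of $g_0$ is automatic from the same equation applied with $g_0$ replaced by another element of $\gpp$.

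There is essentially no obstacle in this argument: Theorem~\ref{thm:growth function} does all the heavy lifting, and the corollary is a clean limit-and-ratio step. The only point worth singling out is the non-emptiness of $\gpp$ whenever $f$ is a non-trivial sandwiching quasimorphism; this is immediate from homogeneity, which forces $f$ to attain arbitrarily large positive values (cf.\ Example~\ref{ExQm}), combined again with Lemma~\ref{lem:sandwich}.
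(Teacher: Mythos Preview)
Your proposal is correct and is exactly the argument the paper has in mind: the paper presents Corollary~\ref{cor:sandwich} as an immediate consequence of the estimate~\eqref{eq:estimate} in Theorem~\ref{thm:growth function} without spelling out any further details, and your limit-and-ratio computation is precisely that immediate step made explicit.
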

Combining this with Lemma~\ref{SandwichFunctorial} we have proven Theorem~\ref{thm:thm1.5}.

We now turn to the proof of Theorem~\ref{thm:growth function}:
\begin{proof}[Proof of Theorem~\ref{thm:growth function}] 
For all $p,n\in\ZZ$, the quasimorphism inequality reads
\bq\label{eq:qm}
       -\|df\|_\infty-nf(h)+pf(g)
\leq f(g^ph^{-n})
\leq pf(g)-nf(h)+\|df\|_\infty\,.
\eq
Remark that if $g\in G^{++}$, then $f(g)>0$.

The first inequality in \eqref{eq:estimate} follows immediately
from the fact that if $p\geq e_n(g,h)$, then
\bqn
0\leq f(g^ph^{-n})\leq pf(g)-nf(h)+\|df\|_\infty\,.
\eqn

To show the second inequality in \eqref{eq:estimate},
observe that for all $p\in\ZZ$ such that the left hand side
of \eqref{eq:qm} is $\geq C$, that is for all $p\in\ZZ$ such that
\bq\label{eq:p_n}
p\geq\frac{nf(h)+C+\|df\|_\infty}{f(g)}\,,
\eq
we have that $g^ph^{-n}\in\Nn_C(f)\subset\gp$.
Thus $p\geq e_n(g,h)$ and hence
\bqn
\frac{nf(h)+C+\|df\|_\infty}{f(g)}\geq e_n(g,h)-1\,.
\eqn
\end{proof}

\section{Order preserving representations}\label{SecGeneral}

\subsection{The Poincar\'e translation number}\label{SecHuber}
Consider the group $\homeo{\RR}$ of orientation preserving homeomorphisms on the real line. We define a bi-invariant partial order on $\homeo{\RR}$ by saying that $f\succeq g$ if $f(t)\geq g(t)$ for all $t\in\RR$.  Note that if we had considered arbitrary homeomorphisms of $\RR$, the order 
would have been only right invariant. An important subgroup of $\homeo{\RR}$ is given by 
\[\homrp := \{g \in \homeo{\RR}\mid g(x+n) = g(x) + n \text{ for all } n \in \mathbb Z\}, \] which can be identified with the universal cover of the group of orientation preserving homeomorphism of the circle $S^1= \RR/\ZZ$. 
Recalll that the Poincar\'e translation number is the homogeneous quasimorphism $\tau\colon \homrp \to \RR$ given by
\[
\tau(g) = \lim_{n \to \infty}\frac{g^n.x-x}{n},
\]
where $x \in \RR$ is an arbitrary basepoint. The following lemma shows that the translation number sandwiches the restriction of the order $\preceq$ to $\homrp$ (cf. \cite[Prop. 2.16]{BenSimon_Hartnick_JOLT}):
\begin{lemma}\label{lem:homeo}  Let $g\in \homrp$.  Then the  following assertions are equivalent:
\be
\item $g(x)>x$ for all $x\in\RR$;
\item $\tau(g)>0$\,.
\ee
\end{lemma}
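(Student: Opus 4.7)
The plan is to prove the two implications separately by direct elementary arguments on $\RR$, using only that $g$ is a monotone homeomorphism commuting with integer translations, together with the fact (already stated in the excerpt) that the limit defining $\tau(g)$ exists.

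For (1)$\Rightarrow$(2), I would consider the function $\phi\colon x\mapsto g(x)-x$. It is continuous on $\RR$ and, because $g$ commutes with integer translations, periodic of period $1$. Assuming $\phi>0$ everywhere, compactness of $[0,1]$ gives a positive lower bound $\delta>0$ with $\phi\geq\delta$ on $[0,1]$, hence by periodicity on all of $\RR$. A simple induction on $n$, using the monotonicity of $g$, then yields $g^n(x)\geq x+n\delta$ for every $x\in\RR$, so that $\tau(g)=\lim_{n\to\infty}(g^n(x)-x)/n\geq\delta>0$.

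For (2)$\Rightarrow$(1), I would argue the contrapositive: assume there exists $x_0\in\RR$ with $g(x_0)\leq x_0$, and show $\tau(g)\leq 0$. If $g(x_0)=x_0$, then $g^n(x_0)=x_0$ for every $n$, so $\tau(g)=0$. Otherwise $g(x_0)<x_0$ strictly and, by monotonicity of $g$, the sequence $y_n:=g^n(x_0)$ is strictly decreasing. Two sub-cases then arise: either $(y_n)$ is bounded below, in which case it converges to some $L\in\RR$ which by continuity of $g$ satisfies $g(L)=L$, so $(y_n-x_0)$ stays bounded and hence $\tau(g)=\lim(y_n-x_0)/n=0$; or $y_n\to-\infty$, in which case $(y_n-x_0)/n$ is eventually strictly negative, forcing its limit $\tau(g)$ to be $\leq 0$. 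Either way $\tau(g)\leq 0$, contradicting the hypothesis $\tau(g)>0$.

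The only mildly delicate step is the two-case analysis in the second implication, which rules out the \emph{a priori} possibility of a decreasing $g$-orbit coexisting with a positive translation number; all remaining ingredients are routine consequences of monotonicity, continuity, and periodicity, so no genuine obstacle is anticipated.
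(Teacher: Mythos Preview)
Your proof is correct. The two implications are handled by different techniques than in the paper, most notably (1)$\Rightarrow$(2).

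For (1)$\Rightarrow$(2), you exploit directly that $\phi(x)=g(x)-x$ is continuous and $1$-periodic, hence attains a positive minimum $\delta$, and then iterate to get $g^n(x)\geq x+n\delta$. The paper instead argues by contradiction: it writes $g^n(x)-x$ as the telescoping sum $\sum_{i=0}^{n-1}\big(g(g^i(x))-g^i(x)\big)$ of positive terms, assumes $\tau(g)=0$, extracts a subsequence along which the summands tend to zero, reduces modulo~$\ZZ$ to $[0,1]$, and obtains a fixed point of $g$ as an accumulation point. Your argument is shorter and more direct; the paper's has the virtue that the same telescoping/accumulation-point scheme is reused later in a non-periodic setting (Proposition~\ref{lem:dominant}), where no uniform lower bound is available and one genuinely needs the subsequence argument.

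For (2)$\Rightarrow$(1), both proofs are contrapositive. The paper observes that if $g(x)-x$ is not everywhere positive then either it is everywhere negative or it changes sign, and in the latter case the Intermediate Value Theorem produces a fixed point outright. You instead fix a single $x_0$ with $g(x_0)\leq x_0$ and analyse the forward orbit, splitting into the bounded case (limit is a fixed point) and the unbounded case ($\tau(g)\leq 0$). Both are fine; the paper's IVT shortcut avoids the two-subcase analysis.
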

\begin{proof}(2)$\Rightarrow$(1):  If (1) fails, then either $g(x)-x<0$ for all $x\in\RR$ or $g(x)-x$ changes sign.
In the first case $g^n(x)<x$ and thus $\tau(g)\leq0$. 
In the second case, by the Intermediate Value Theorem there is 
$x_0\in\RR$ with $g(x_0)=x_0$ and thus $\tau(g)=0$.

\medskip
\noindent
(1)$\Rightarrow$(2):  If $g^n$ denotes the $n$-th iterate of $g$,
then $\tau(g)=\lim_{n\to\infty}\frac{g^n(x)-x}{n}$ for every $x\in\RR$.
In view of (1), the sum
\bqn
g^n(x)-x=\sum_{i=0}^{n-1}(g(g^i(x))-g^i(x))
\eqn
consists of positive terms.  Thus if $\tau(g)=0$, there is a subsequence $i_k\to+\infty$
with 
\bqn
\lim_{k\to\infty}(g(g^{i_k}(x))-g^{i_k}(x))=0\,.
\eqn
Thus, since $g$ commutes with integer translations,
if $\{\,\cdot\,\}$ denotes the fractional part of a real number, then
\bqn
\lim_{k\to\infty}(g(\{g^{i_k}(x)\})-\{g^{i_k}(x)\})=0\,.
\eqn
If now $y\in[0,1]$ is an accumulation point of the sequence $\{g^{i_k}(x)\}_{k\geq1}$,
then $g(y)=y$, which contradicts (1).
\end{proof}
Taking into account Lemma~\ref{lem:sandwich}, and setting $\check G = \homrp$ we deduce that 
\bqn
\ba
\check G^{++}
=\{g\in \check G\colon\,\tau(g)>0\}
=\{g\in \check G\colon\, g(x)>x\text{ for all }x\in\RR\}\\
\subsetneq \check G^+
\subsetneq\{g\in \check G\colon\, \tau(g)\geq0\},
\ea
\eqn
which in particular shows that $\tau$ sandwiches $\preceq$ as claimed. 

Moreover, the Archimedean order associated with $\check G^{++}$ is 
the order $\leq_0$ on $\homrp$ introduced in \S~\ref{sec:1.1}.

\subsection{The order on fundamental groups of surfaces}\label{SubsecOrderSurface}

The standard action of ${\rm PSL}_2(\RR)$ on $\mathbb{RP}^1\cong S^1$ gives rise to an action of the universal cover $\widetilde{\rm PSL}_2(\RR)$ on $\RR$ commuting with integral translations. 
Hence we may consider $\widetilde{\rm PSL}_2(\RR)$ as a subgroup of $\homrp$. We denote by $\tau\colon \widetilde{\rm PSL}_2(\RR) \to \RR$ the restriction of the translation number to $\widetilde{\rm PSL}_2(\RR)$  and write $\leq_0$ for the Archimedean order on $\widetilde{\rm PSL}_2(\RR)$ induced from the Archimedean order on $\homrp$. 

Given a hyperbolization $\rho_h\colon \pi_1(\Sigma) \to {\rm PSL}_2(\RR)$ we consider the canonical lift $ \widetilde{\rho}_h\colon \Lambda_\Sigma \to \widetilde{\rm PSL}_2(\RR)$.  Composition with the translation number defines a homogeneous quasimorphism on $\Lambda_\Sigma$: 
\[ \tau \circ \widetilde{\rho}_h\colon \Lambda_\Sigma \to \RR\]

\begin{lemma}\label{lem:fundamental_class}
The quasimorphism 
\[f_\Sigma:= \tau \circ \widetilde{\rho}_h\colon \Lambda_\Sigma \to \RR\]
is independent of the chosen hyperbolization $\rho_h$. 
\end{lemma}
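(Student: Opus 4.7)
The plan is to show that for each fixed $\gamma \in \Lambda_\Sigma$, the function
\[
F_\gamma(h) := \tau\bigl(\widetilde\rho_h(\gamma)\bigr)
\]
is a continuous $\ZZ$-valued function on the connected space of complete hyperbolic structures on $\Sigma^\circ$ compatible with the orientation, and is therefore constant.

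First, I would verify that $F_\gamma$ takes integer values. For any hyperbolization, $\rho_h$ has torsion-free discrete image in $\PSL(2,\RR)$. Let $\bar\gamma$ denote the image of $\gamma$ in $\pi_1(\Sigma)$, using the natural projection $\Lambda_\Sigma \hookrightarrow \pi_1(\Sigma)$ when $\Sigma$ has boundary and $\Lambda_\Sigma \subset \widehat\Gamma \twoheadrightarrow \pi_1(\Sigma)$ when $\Sigma$ is closed. If $\bar\gamma \neq e$, then $\rho_h(\bar\gamma)$ is either hyperbolic or, only if $\partial\Sigma \neq \emptyset$, parabolic; both types fix a point on $\mathbb{RP}^1 \cong S^1$, so the rotation number on $S^1$ vanishes modulo $\ZZ$, and every lift to $\widetilde\PSL(2,\RR) \subset \homrp$ has integer translation number. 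If $\bar\gamma = e$, which is possible only in the closed case with $\gamma$ in the central $\ZZ \subset \widehat\Gamma$, then $\widetilde\rho_h(\gamma)$ lies in the kernel of $\widetilde\PSL(2,\RR) \to \PSL(2,\RR)$, i.e.\ in the central $\ZZ$ acting by integer translations, so again $\tau(\widetilde\rho_h(\gamma)) \in \ZZ$.

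Next, I would establish continuity of $F_\gamma$. The holonomy map $h \mapsto \rho_h$ is continuous into $\hom(\pi_1(\Sigma), \PSL(2,\RR))$. In a neighborhood of any $h_0$, one can choose continuous lifts to $\widetilde\PSL(2,\RR)$ of the $\rho_h$-images of a finite generating set of $\pi_1(\Sigma)$ (or of $\widehat\Gamma$ in the closed case); these extend to local homomorphism-valued lifts on $\Lambda_\Sigma$ that, by the uniqueness asserted in Subsection~\ref{sec:1.1}, must coincide with the canonical lift $\widetilde\rho_h|_{\Lambda_\Sigma}$. Thus $h \mapsto \widetilde\rho_h(\gamma) \in \widetilde\PSL(2,\RR)$ is continuous, and composing with the continuous quasimorphism $\tau$ yields continuity of $F_\gamma$.

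Finally, the Teichm\"uller space of complete hyperbolic structures on $\Sigma^\circ$ compatible with the orientation is a cell, hence connected. A continuous $\ZZ$-valued function on a connected space is constant, so $F_\gamma$ does not depend on $h$, and the lemma follows. The most delicate point is the continuity of the canonical lift $h \mapsto \widetilde\rho_h|_{\Lambda_\Sigma}$ in the closed surface case, where the construction passes through the central extension $\widehat\Gamma$; the key is that any continuous choice of local lifts on generators produces, via the canonical construction, a homomorphism that must agree with $\widetilde\rho_h$ by uniqueness, ruling out a central jump as $h$ varies.
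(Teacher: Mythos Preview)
Your argument is correct and takes a genuinely different route from the paper's.

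The paper proceeds via bounded cohomology: the coboundary $d\tau$ represents the pullback $p^*\kappa^b_{\PSL_2(\RR)}$ of the bounded K\"ahler class, and it invokes \cite[\S 8.2]{Burger_Iozzi_Wienhard_tol} to conclude that the class $\rho_h^*\kappa^b_{\PSL_2(\RR)}\in H^2_b(\pi_1(\Sigma);\RR)$ is independent of the hyperbolization $h$. Working on $\pi_1(\Sigma)$ (respectively $\widehat\Gamma$ in the closed case), one then deduces that the homogeneous quasimorphisms $\tau\circ\widetilde\rho_{h_1}$ and $\tau\circ\widetilde\rho_{h_2}$ differ by a homomorphism into $\RR$, which necessarily vanishes on the commutator subgroup $\Lambda_\Sigma$.

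Your approach bypasses bounded cohomology entirely and replaces the external input from \cite{Burger_Iozzi_Wienhard_tol} by the elementary observation that $F_\gamma$ is $\ZZ$-valued (because every nontrivial element in the image of a discrete faithful representation is hyperbolic or parabolic, hence has a fixed point on $S^1$), together with continuity and connectedness of the deformation space. Your continuity argument via locally continuous lifts of generators and uniqueness of the canonical lift on $\Lambda_\Sigma$ is sound; this is exactly what rules out the ``central jump'' you flag. One small point to tighten: in the bounded case, the space of \emph{all} complete hyperbolic structures on $\Sigma^\circ$ (allowing each end to be either a cusp or a funnel) is indeed path-connected, but calling it ``a cell'' without further comment is a bit quick; path-connectedness is all you need and follows from standard Fenchel--Nielsen--type parametrizations.

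Each approach has its merits. The paper's route situates $f_\Sigma$ directly in the bounded-cohomology framework used later (Proposition~\ref{prop:prop3.7}, Theorem~\ref{thm:thm3.6}) to connect with the bounded K\"ahler class and the Toledo invariant. Your route is more self-contained and has the side benefit of making the integrality of $f_\Sigma$ explicit.
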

\begin{proof}
As alluded to above, the boundary of the translation number $[d\tau]$ defines a (continuous) bounded cohomology class $[d\tau] \in H^2_{cb}( \widetilde{\rm PSL}_2(\RR); \RR)$ 
in the second bounded cohomology of $\widetilde{\rm PSL}_2(\RR)$. 
Let $\kappa^b_{\rm PSL_2(\RR)}$ denote the bounded K\"ahler class of ${\rm PSL}_2(\RR)$, and let $p^*\kappa^b_{\rm PSL_2(\RR)}$ be the pullback under the projection $p\colon \widetilde{\rm PSL}_2(\RR) \to {\rm PSL}_2(\RR)$, then 
\[
p^*\kappa^b_{\rm PSL_2(\RR)} = [d\tau] \in H^2_{cb}( \widetilde{\rm PSL}_2(\RR); \RR).
\]

It was proved in \cite[\S~8.2]{Burger_Iozzi_Wienhard_tol} that for a hyperbolization $\rho_h\colon \pi_1(\Sigma) \to {\rm PSL}_2(\RR)$, the bounded cohomology class $\rho_h^*\kappa^b_{\rm PSL_2(\RR)} \in H^2_{b}( \pi_1(\Sigma); \RR)$ is independent of the choice of $\rho_h$. 
Thus, given any two hyperbolic structures $h_1$ and $h_2$, 
\bqn
d(\tau\circ\widetilde\rho_{h_1})=d(\tau\circ\widetilde\rho_{h_2})
\eqn
and hence $\tau\circ\widetilde\rho_{h_1}$ and $\tau\circ\widetilde\rho_{h_2}$ must coincide on $\Lambda_\Sigma$.
\end{proof}

\begin{rem}
 A geometric interpretation of the quasimorphism $f_\Sigma$ as a generalized winding number was given by Huber in \cite{Huber}, following earlier work of Chillingworth \cite{Chillingworth_1, Chillingworth_2}.
\end{rem}

In view of Lemma \ref{dominantsFunctorial} and Lemma \ref{lem:homeo} we have now the following consequence.

\begin{prop}\label{dominantPSL} If $\rho_h\colon\Gamma \to{\rm PSL}_2(\RR)$ is a hyperbolization as above then
\begin{eqnarray*}
\Lambda^{++}_\Sigma &:=& \{\gamma\in \Lambda\mid \widetilde{\rho}_h(\gamma).x>x \text{ for all }x\in \RR\}\\
&=& \{\gamma \in \Lambda\mid f_\Sigma(\gamma)>0\}
\end{eqnarray*}
is a dominant semigroup, which is independent of the hyperbolization used to define it.
\end{prop}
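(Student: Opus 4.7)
The plan is to deduce the proposition directly from three earlier results: the identification of the dominant semigroup of the partial order $\preceq$ on $\homrp$ via the translation number (from Lemma~\ref{lem:homeo} together with Lemma~\ref{lem:sandwich}), the functoriality of dominant semigroups under pullback (Lemma~\ref{dominantsFunctorial}), and the hyperbolization-independence of $f_\Sigma$ (Lemma~\ref{lem:fundamental_class}).

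Setting $\check G := \homrp$, the discussion following Lemma~\ref{lem:homeo} records that
\[
\check G^{++} = \{g \in \check G : g(x) > x \text{ for all } x \in \RR\} = \{g \in \check G : \tau(g) > 0\}.
\]
Composing the canonical lift $\widetilde{\rho}_h\colon \Lambda_\Sigma \to \widetilde{\rm PSL}_2(\RR)$ with the inclusion $\widetilde{\rm PSL}_2(\RR) \hookrightarrow \check G$, Lemma~\ref{dominantsFunctorial} then guarantees that $\widetilde{\rho}_h^{-1}(\check G^{++})$ is a dominant semigroup in $\Lambda_\Sigma$. Unfolding this preimage through the first description of $\check G^{++}$ produces the first set appearing in the statement, while unfolding it through the second description and using the identity $f_\Sigma = \tau \circ \widetilde{\rho}_h$ produces the second set. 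This simultaneously establishes the equality of the two descriptions and the fact that the common set is a dominant semigroup of $\Lambda_\Sigma$.

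Independence from the hyperbolization is then immediate from the second description, since by Lemma~\ref{lem:fundamental_class} the quasimorphism $f_\Sigma$ does not depend on $\rho_h$. I do not anticipate any serious obstacle: the entire argument amounts to unfolding definitions and quoting earlier results. The only point worth a brief check is that the target of $\widetilde{\rho}_h$ sits inside $\homrp$, so that Lemma~\ref{dominantsFunctorial} applies verbatim with $H = \homrp$ and $G = \Lambda_\Sigma$.
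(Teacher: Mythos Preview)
Your proposal is correct and matches the paper's own argument, which simply states that the proposition follows ``in view of Lemma~\ref{dominantsFunctorial} and Lemma~\ref{lem:homeo}'' (with Lemma~\ref{lem:fundamental_class} implicitly supplying the independence). You have spelled out precisely the details the paper leaves implicit.
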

We also record the following easy fact for later reference:
\begin{prop}\label{PropqSandwich} For every $q>0$ the set
\begin{eqnarray*}
\Lambda^{++}_{q,\Sigma} &:=& \{\gamma\in \Lambda\mid \widetilde{\rho}_h(\gamma).x>x+q \text{ for all }x\in \RR\}\\
&=& \{\gamma \in \Lambda\mid f_\Sigma(\gamma)>q\}.
\end{eqnarray*}
is a dominant semigroup, which is sandwiched by $f_\Sigma$. In particular, $\Lambda^{++}_\Sigma=\Lambda^{++}_{\Sigma,0}$ is sandwiched by $f_\Sigma$.
\end{prop}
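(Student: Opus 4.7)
My plan proceeds in three steps. First, I will show that for $q \in \NN$, the set
\[
(\homrp)^+_q := \{e\} \cup \{\varphi \in \homrp : \varphi(x) > x + q \text{ for all } x \in \RR\}
\]
is an Archimedean order semigroup on $\homrp$, then pull it back via $\widetilde\rho_h$. Closure under composition is immediate from $q > 0$; pointedness is clear; conjugation-invariance follows from the fact that elements of $\homrp$ commute with integer translations (which is where $q \in \NN$ enters essentially). For the Archimedean property, given $\varphi \in (\homrp)^+_q \setminus \{e\}$ and any $\psi \in \homrp$, periodicity bounds $\psi(x) - x$ uniformly by some $M$, while $\varphi^n(x) > x + nq$ eventually dominates, so $\varphi^n \geq_q \psi$ for $n$ large enough. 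Hence the entire non-identity part of $(\homrp)^+_q$ is the dominant set, and Lemma \ref{dominantsFunctorial} applied to $\widetilde\rho_h\colon \Lambda_\Sigma \to \homrp$ shows that $\Lambda^{++}_{q, \Sigma}$ is a dominant semigroup.

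Second, the sandwich by $f_\Sigma$ will follow from the classical estimate $|\tau(\varphi) - (\varphi(x) - x)| \leq \|d\tau\|_\infty \leq 1$ on $\homrp$: this yields $\tau(\varphi) > q + 1 \Rightarrow \varphi(x) > x + q$, so $\leq_q$ on $\homrp$ is sandwiched by $\tau$ with constant $q + 1$. Pulling back, Lemma \ref{SandwichFunctorial} gives the sandwich of $\Lambda^{++}_{q, \Sigma}$ by $f_\Sigma$.

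The main obstacle is the set equality $\Lambda^{++}_{q, \Sigma} = \{\gamma : f_\Sigma(\gamma) > q\}$. The inclusion $\subset$ follows from periodicity: if $\widetilde\rho_h(\gamma)(x) > x + q$ for all $x$, then the continuous $1$-periodic function $\widetilde\rho_h(\gamma)(x) - x - q$ attains a positive minimum $\epsilon > 0$ on $[0, 1]$, whence $f_\Sigma(\gamma) \geq q + \epsilon > q$. The reverse inclusion exploits the geometry of the hyperbolization: every element of $\widetilde\rho_h(\Lambda_\Sigma)$ has integer translation number, because $\rho_h$ is discrete and faithful on the torsion-free group $\pi_1(\Sigma)$ (ruling out elliptic elements), so the projection to $\PSL_2(\RR)$ is either trivial or hyperbolic/parabolic, all of whose lifts have integer $\tau$. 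Hence $f_\Sigma(\gamma) > q$ with $q \in \NN$ upgrades to $f_\Sigma(\gamma) \geq q + 1$, and combined with the strict inequality $\widetilde\rho_h(\gamma)(x) - x > \tau(\widetilde\rho_h(\gamma)) - 1$ (valid for lifts of elements of $\PSL_2(\RR)$, since the $\tau = 0$ lift satisfies $|\widetilde g_0(x) - x| < 1$ strictly), this gives $\widetilde\rho_h(\gamma)(x) > x + q$. Specializing the sandwich argument to $q = 0$ and invoking Proposition \ref{dominantPSL} yields the final assertion.
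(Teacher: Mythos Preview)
Your argument is correct, but it differs from the paper's in how you establish the set equality
\[
\{\gamma\in\Lambda_\Sigma\mid \widetilde\rho_h(\gamma)x>x+q\text{ for all }x\}=\{\gamma\in\Lambda_\Sigma\mid f_\Sigma(\gamma)>q\}.
\]
The paper obtains this in one stroke from Lemma~\ref{lem:homeo}: for integer $q$ the translation $T_q(x)=x+q$ is central in $\homrp$, so the condition $\widetilde\rho_h(\gamma)x>x+q$ is equivalent to $T_{-q}\widetilde\rho_h(\gamma)x>x$, which by Lemma~\ref{lem:homeo} is equivalent to $\tau(T_{-q}\widetilde\rho_h(\gamma))>0$, i.e.\ $f_\Sigma(\gamma)>q$. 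No property of the hyperbolization beyond the definition of $f_\Sigma$ is used. By contrast, you prove the inclusion $\supset$ via the integrality of $f_\Sigma$ on $\Lambda_\Sigma$, which you deduce from the absence of elliptic elements in the discrete faithful image $\rho_h(\pi_1(\Sigma))$; this is a genuine extra geometric input, correct but unnecessary here.

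For the remaining assertions the two proofs are closely aligned. Once the set equality is known, the paper identifies $\{e\}\cup\Lambda^{++}_{q,\Sigma}$ as an order semigroup sandwiched by $f_\Sigma$ via Example~\ref{ExQm}, and then invokes Lemma~\ref{lem:sandwich} to conclude that every nontrivial element is dominant (since each has $f_\Sigma>q\geq 0$). Your route---verifying the Archimedean property of $\leq_q$ directly on $\homrp$ and then pulling back with Lemmas~\ref{dominantsFunctorial} and~\ref{SandwichFunctorial}---reaches the same conclusion and has the virtue of being self-contained, but the paper's appeal to the already-established Lemma~\ref{lem:sandwich} is shorter. In summary: both proofs are valid; the paper's is more economical because it leverages Lemma~\ref{lem:homeo} for the set equality rather than the special arithmetic of translation numbers under a hyperbolization.
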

\begin{proof} The first equality follows from Lemma \ref{lem:homeo}. It then follows from Example \ref{ExQm} that $\{e\} \cup \Lambda^{++}_{q,\Sigma}$ is an order semigroup sandwiched by $f_\Sigma$. The fact that every element in $\Lambda^{++}_{q,\Sigma}$ is dominant follows from Lemma \ref{lem:sandwich}.
\end{proof}

Note that Theorem \ref{thm:InvariantOrder} of the introduction is an immediate consequence of Proposition \ref{dominantPSL}  and Proposition \ref{PropqSandwich}. At this point we have thus established that hyperbolizations are order preserving representations. In order to show that conversely any order preserving representation is a hyperbolization, we will consider the more general setting of order preserving representations into Lie groups of Hermitian type 

\subsection{Order preserving representations}\label{SecMeta}
\begin{proof}[Proof of Theorem~\ref{thm:thm1.1}]
The implications (1)$\Rightarrow$(2) and (3)$\Rightarrow$(2) are the content of Propositions~\ref{dominantPSL}
and \ref{PropqSandwich}.  Assume now that (2) holds.  Then it follows from Theorem~\ref{thm:thm1.5}
that there is $\lambda>0$ with
\bqn
\tau\circ\widetilde\rho=\lambda f_\Sigma\,.
\eqn
Now since $d\tau$ and $df_\Sigma$ take values in $\{-1,0,1\}$, 
this implies that $\lambda=1$. As a result $\rho\colon\pi_1(\Sigma)\to\PSL(2,\RR)$
is a maximal representation and the result follows from \cite[Theorem~3]{Burger_Iozzi_Wienhard_tol}.
\end{proof}

\begin{proof}[Proof of Corollary~\ref{cor:cor1.2}]
Let $i\colon\pi_1(\Sigma_1)\to\pi_1(\Sigma_2)$ be an isomorphism,
with $i:\Lambda_{\Sigma_1}\to\Lambda_{\Sigma_2}$ strictly order preserving.
Let $g:\Sigma_1^\circ\to\Sigma_2^\circ$ be a continuous map implementing $i$,
and $h$ a hyperbolic structure on $\Sigma_2^\circ$ with holonomy
$\rho_h^{(2)}\colon\pi_1(\Sigma_2)\to\PSL(2,\RR)$ and developing map
\bqn
f_h^{(2)}\colon\Sigma_2^\circ\to\rho_h^{(2)}(\pi_1(\Sigma_2))\backslash \DD,
\eqn
which is a diffeomorphism.  
Then the lift $\widetilde{\rho_h^{(2)}}\circ i\colon\Lambda_\Sigma\to\widetilde{PSL}(2,\RR)$
is strictly order preserving and hence there is a complete hyperbolic structure $h'$ on $\Sigma_1^\circ$
with $\rho_{h'}^{(1)}=\rho_h^{(2)}\circ i$.  If now
\bqn
f_{h'}^{(1)}\colon\Sigma_1^\circ\to\rho_{h'}^{(1)}(\pi_1(\Sigma_1))\backslash\DD=\rho_{h}^{(2)}(\pi_1(\Sigma_2))\backslash\DD
\eqn
is the developing diffeomorphism, then $f_{h'}^{(1)}$ is homotopic to $f_h^{(2)}\circ g$ 
and hence $g$ is homotopic to a diffeomorphism.
\end{proof}

Now we turn to homomorphisms with values in a connected simple group $G$ of Hermitian type
and with finite center.  As before we let $\check G$ denote the connected $\ZZ$-covering of $G$
and $f_{\check G}\colon\check G\to \RR$ the continuous homogeneous quasimorphism such that
$[df_{\check G}]$, seen as a class on $G$, represents the bounded K\"ahler class $\kgb$.

We will prove here the following result of which Theorem~\ref{thm:thm1.6} will be a corollary.

\begin{thm}\label{thm:thm3.6}  Let $\check G^+$ be an Archimedean order on $\check G$ sandwiched by $f_{\check G}$.
Then there is $q_0:=q_0(G,\check G,\check G^+)$, such that for a representation $\rho\colon\pi_1(\Sigma)\to G$
with lift $\widetilde\rho\colon\Lambda_\Sigma\to\check G$ the following are equivalent:
\be
\item The lift $\widetilde\rho$ is strictly order preserving for $\leq_{q_0,\Sigma}$.
\item There is $q\in\NN$ such that $\widetilde\rho$ is strictly order preserving for $\leq_{q,\Sigma}$.
\item There exists $\lambda>0$ with 
\bqn
f_{\check G}\circ\widetilde\rho |_{\Lambda_\Sigma}=\lambda f_\Sigma\,.
\eqn
\ee
\end{thm}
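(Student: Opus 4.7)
The plan is to establish the cyclic chain $(1) \Rightarrow (2) \Rightarrow (3) \Rightarrow (1)$. The first implication is immediate, since any $q_0$ witnessing $(1)$ also witnesses $(2)$ by taking $q:=q_0$.

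For $(2) \Rightarrow (3)$ I would invoke the reconstruction Theorem~\ref{thm:thm1.5}. On the source side, Proposition~\ref{PropqSandwich} ensures that for each $q \in \NN$ the order $\leq_{q,\Sigma}$ on $\Lambda_\Sigma$ is Archimedean (its positive cone coincides with the dominant semigroup $\{e\} \cup \Lambda^{++}_{q,\Sigma}$) and sandwiched by $f_\Sigma$; on the target side, $\check G^+$ is Archimedean and sandwiched by $f_{\check G}$ by hypothesis. A strictly order preserving homomorphism $\widetilde\rho$ between such data must therefore satisfy $f_{\check G} \circ \widetilde\rho|_{\Lambda_\Sigma} = \lambda f_\Sigma$ for some $\lambda > 0$.

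For $(3) \Rightarrow (1)$, Example~\ref{ExQm} applied to $f_{\check G}$ together with Lemma~\ref{lem:sandwich} for the Archimedean $\check G^+$ supplies a constant $C = C(G, \check G, \check G^+) > 0$ with $\{g \in \check G : f_{\check G}(g) \geq C\} \subset \check G^{++}$. Given $\widetilde\rho$ with $f_{\check G} \circ \widetilde\rho = \lambda f_\Sigma$ and $\lambda > 0$, any $\gamma$ with $\gamma >_{q_0, \Sigma} e$ satisfies $f_\Sigma(\gamma) > q_0$ by Proposition~\ref{PropqSandwich}, and hence $f_{\check G}(\widetilde\rho(\gamma)) = \lambda f_\Sigma(\gamma) > \lambda q_0$. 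Choosing $q_0$ so that $\lambda q_0 \geq C$ then forces $\widetilde\rho(\gamma) \in \check G^{++}$, while the strict positivity $f_{\check G}(\widetilde\rho(\gamma)) > 0$ automatically excludes $\widetilde\rho(\gamma) = e$; thus $\widetilde\rho$ is strictly order preserving for $\leq_{q_0, \Sigma}$.

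The main obstacle is arranging that $q_0$ depends only on $(G, \check G, \check G^+)$ (and $\Sigma$), independent of the particular $\rho$. The route is a uniform positive lower bound on the $\lambda$ appearing in $(3)$. For closed $\Sigma$, evaluating $f_{\check G} \circ \widetilde\rho = \lambda f_\Sigma$ on the central generator $z \in \Lambda_\Sigma$ coming from the extension~\eqref{eq:central extension} yields $\lambda\,|\chi(\Sigma)| = f_{\check G}(\widetilde\rho(z))$; the left-hand factor is given by the Euler-number computation recalled in Section~\ref{SubsecOrderSurface}, while the right-hand side lies in $\ZZ$ because $\widetilde\rho(z)$ is central in $\check G$ and $f_{\check G}$ is normalized integrally via the bounded K\"ahler class. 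Hence $\lambda \geq 1/|\chi(\Sigma)|$ whenever $\lambda > 0$, and an analogous quantization argument using boundary rotation numbers handles the case $\partial\Sigma \neq \emptyset$. Taking $q_0 := \lceil C\,|\chi(\Sigma)| \rceil$ (or its boundary analogue) then serves uniformly for all representations satisfying $(3)$, closing the loop.
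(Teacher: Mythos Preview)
Your overall architecture matches the paper exactly: the cycle $(1)\Rightarrow(2)\Rightarrow(3)\Rightarrow(1)$, with $(2)\Rightarrow(3)$ coming from Theorem~\ref{thm:thm1.5} applied to the sandwiched Archimedean orders on both sides. That step is fine.

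The difficulty, as you correctly isolate, is the uniform positive lower bound on $\lambda$ needed for $(3)\Rightarrow(1)$. Here your argument has a genuine gap. For closed $\Sigma$ your idea is essentially right: the central generator $z\in\Lambda_\Sigma$ maps into the kernel $\ZZ$ of $\check G\to G$, so $f_{\check G}(\widetilde\rho(z))$ lies in $c\ZZ$ for the constant $c=f_{\check G}(\text{generator})$ determined by $G$ (not in $\ZZ$ itself---the bounded K\"ahler normalization does not force $c=1$). This still yields $\lambda\geq c/|\chi(\Sigma)|$, though the resulting $q_0$ then depends on $\Sigma$, whereas the theorem asserts $q_0=q_0(G,\check G,\check G^+)$.

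The real problem is the case $\partial\Sigma\neq\emptyset$. There is no central element to evaluate on, and ``an analogous quantization argument using boundary rotation numbers'' does not exist in the literature as an elementary statement. The rotation numbers of the boundary loops under $\widetilde\rho$ are a priori arbitrary real numbers, and there is no obvious discrete constraint on $\lambda$ coming from them alone. What the paper actually invokes here are two results from \cite{BBHIW1}: first, that any $\lambda$ as in $(3)$ is necessarily $\lambda=|\chi(\Sigma)|^{-1}\T(\rho)$ (their Proposition~3.2, restated as Proposition~\ref{prop:prop3.7}); second, and crucially, the rationality theorem \cite[Theorem~1.3]{BBHIW1} guaranteeing an integer $\ell_G$ depending only on $G$ with $\ell_G\T(\rho)\in|\chi(\Sigma)|\ZZ$ for every weakly maximal $\rho$. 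Together these give $\lambda\ell_G\in\NN^\ast$, hence $\lambda\geq 1/\ell_G$, and one takes $q_0:=C\ell_G$. The rationality theorem is a substantive structural fact about weakly maximal representations of surfaces with boundary, not something one can replace by a direct quasimorphism computation; without it your loop does not close.
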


Theorem~\ref{thm:thm1.6} is then a consequence of \cite{BenSimon_Hartnick_Comm} 
and the following characterization of weakly maximal representations \cite{BBHIW1}:

\begin{prop}\label{prop:prop3.7}  A representation $\rho\colon\pi_1(\Sigma)\to G$ is weakly maximal 
if and only if its lift
\bqn
\widetilde\rho\colon\Lambda_{\Sigma_1}\to\check G
\eqn
satisfies
\bqn
f_{\check G}\circ\widetilde\rho |_{\Lambda_\Sigma}=\lambda f_\Sigma
\eqn
for some $\lambda \in \RR$.  

Moreover in this case $\lambda=|\chi(\Sigma)|^{-1}\T(\rho)$, 
where $\T(\rho)$ is the Toledo invariant of $\rho$.
\end{prop}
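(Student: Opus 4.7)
The plan is to pass to bounded cohomology and match both conditions via the coboundary identifications $[df_{\check G}] = \kappa_G^b$ (on $G$) and $[df_\Sigma] = \rho_h^*\kappa^b_{\PSL(2,\RR)}$ (on $\Lambda_\Sigma$) provided by Lemma~\ref{lem:fundamental_class}.

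I would first invoke the bounded-cohomological characterization of weakly maximal from \cite{BBHIW1}: $\rho$ is weakly maximal iff $\rho^*\kappa_G^b = \lambda\cdot \rho_h^*\kappa^b_{\PSL(2,\RR)}$ in $H^2_b(\pi_1(\Sigma); \RR)$ for some (equivalently any) hyperbolization $\rho_h$. The constant is recovered as $\lambda = |\chi(\Sigma)|^{-1}\T(\rho)$ by pairing the identity with the (relative) fundamental class of $\Sigma$ and using the normalization $\langle\rho_h^*\kappa^b_{\PSL(2,\RR)}, [\Sigma,\partial\Sigma]\rangle = |\chi(\Sigma)|$.

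I would then verify that on $\Lambda_\Sigma$ the coboundaries $d(f_{\check G}\circ\widetilde\rho)$ and $df_\Sigma$ represent the pullbacks to $\Lambda_\Sigma$ of $\rho^*\kappa_G^b$ and $\rho_h^*\kappa^b_{\PSL(2,\RR)}$, respectively. For $f_\Sigma$ this is essentially the content of Lemma~\ref{lem:fundamental_class}, via the identification $p^*\kappa^b_{\PSL(2,\RR)}=[d\tau]$. For $f_{\check G}\circ\widetilde\rho$, one uses that $[df_{\check G}]$ represents $\kappa_G^b$ on $G$ (as recalled just before the proposition), combined with the canonical splitting of the central extension $\check G \to G$ over commutator subgroups that legitimises the lift $\widetilde\rho$ on $\Lambda_\Sigma$. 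Hence the weakly maximal identity restricts to $[d(f_{\check G}\circ\widetilde\rho)]=\lambda\,[df_\Sigma]$ in $H^2_b(\Lambda_\Sigma;\RR)$, so the homogeneous quasimorphism $h:=f_{\check G}\circ\widetilde\rho - \lambda f_\Sigma$ has cohomologically trivial coboundary and is therefore a homomorphism $\Lambda_\Sigma \to \RR$.

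The main obstacle is then to show $h=0$. I would exploit that $\Lambda_\Sigma$ is, by construction, the commutator subgroup of the ambient group ($\pi_1(\Sigma)$ in the boundary case, $\widehat\Gamma$ in the closed case), and promote both quasimorphisms to homogeneous quasimorphisms on the ambient group: $f_\Sigma$ extends as $\tau\circ \widetilde\rho_h$ on all of $\widehat\Gamma$ since hyperbolizations are maximal, while $f_{\check G}\circ\widetilde\rho$ extends via composition of $f_{\check G}$ with any set-theoretic lift of $\rho$, using that $f_{\check G}$ is exactly additive on the central kernel of $\check G\to G$. The difference of the two extensions is a homogeneous quasimorphism on the ambient group whose coboundary remains cohomologically trivial, hence a genuine homomorphism to $\RR$; such a homomorphism automatically vanishes on the commutator subgroup $\Lambda_\Sigma$, forcing $h=0$. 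The converse direction reverses these steps: the identity on $\Lambda_\Sigma$ gives equality of the coboundary classes there, which promotes to the weakly maximal identity on $\pi_1(\Sigma)$ via the same extension argument.
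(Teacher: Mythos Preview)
The paper does not actually prove Proposition~\ref{prop:prop3.7}; it is quoted verbatim from \cite[Proposition~3.2]{BBHIW1}. So there is no ``paper's own proof'' to compare against---your sketch is a reconstruction of how the argument in \cite{BBHIW1} presumably runs.

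Your forward direction is essentially right, and can be streamlined: rather than set-theoretic lifts, note that genuine homomorphic lifts $\tilde\rho'\colon\Gamma'\to\check G$ exist on the ambient group $\Gamma'$ in both cases (trivially when $\Gamma'=\pi_1(\Sigma)$ is free; in the closed case because $p_\Sigma^*$ annihilates $H^2(\pi_1(\Sigma);\ZZ)$, so the obstruction class $(\rho\circ p_\Sigma)^*e_{\check G}$ vanishes). All such lifts agree on $\Lambda_\Sigma=[\Gamma',\Gamma']$, giving exactly the canonical $\widetilde\rho$, and then $F_1:=f_{\check G}\circ\tilde\rho'$, $F_2:=\tau\circ\tilde\rho_h'$ are honest homogeneous quasimorphisms on $\Gamma'$ whose coboundaries represent the pulled-back K\"ahler classes. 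Your conclusion that $F_1-\lambda F_2$ is a homomorphism (hence vanishes on $\Lambda_\Sigma$) then goes through cleanly.

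The converse, however, is too quick. From $F_1|_{\Lambda_\Sigma}=\lambda F_2|_{\Lambda_\Sigma}$ you need to deduce $[dF_1]=\lambda[dF_2]$ in $H^2_b(\Gamma')$, i.e.\ that the homogeneous quasimorphism $F:=F_1-\lambda F_2$ on $\Gamma'$ is a homomorphism. Knowing only that $F$ vanishes on $[\Gamma',\Gamma']$ is \emph{not} the same as knowing $F$ factors through the abelianization, so this requires an argument. The missing lemma is: a homogeneous quasimorphism vanishing on the commutator subgroup is a homomorphism. (Proof: for $g,h\in\Gamma'$ and each $n$, write $(gh)^n=c_n\,g^nh^n$ with $c_n\in[\Gamma',\Gamma']$; two applications of the defect inequality together with $F(c_n)=0$ give $|nF(gh)-nF(g)-nF(h)|\le 2\|dF\|_\infty$, and dividing by $n$ yields additivity.) In the closed case you then still need to descend the resulting identity from $H^2_b(\widehat\Gamma)$ to $H^2_b(\pi_1(\Sigma))$; this follows because $\ker p_\Sigma\cong\ZZ$ is amenable, so $p_\Sigma^*$ is an isomorphism on bounded cohomology. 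Neither of these two ingredients is addressed by ``reversing the steps,'' so as written the converse has a gap.
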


\begin{proof}[Proof of Theorem~\ref{thm:thm3.6}]  We define $q_0\in\NN$ as follows:
let $C\in\NN^\ast$ be such that $\Nn_C(f_{\check G})\subset\check G^+$
and let $\ell_G\geq1$ be an integer such that 
\bqn
\ell_G\T(\rho)\in|\chi(\Sigma)|\ZZ
\eqn
for every weakly maximal representation $\rho$ (see \cite[Theorem~1.3]{BBHIW1}); set $q_0:=C\ell_G$.

For this choice of constant we show the implications (2)$\Rightarrow$(3)$\Rightarrow$(1);
since (1)$\Rightarrow$(2) holds trivially, this will finish the proof.

\noindent
(2)$\Rightarrow$(3)  Since $\widetilde\rho$ is strictly order preserving, the orders 
are Archimedean and sandwiched respectively by $f_\Sigma$ and $f_{\check G}$.
Then Theorem~\ref{thm:thm1.5} implies that 
\bqn
f_{\check G}\circ\widetilde\rho|_{\Lambda_\Sigma}=\lambda f_\Sigma
\eqn
for some $\lambda>0$.  

\noindent
(3)$\Rightarrow$(1)  By \cite[Proposition~3.2]{BBHIW1} we have that $\lambda=|\chi(\Sigma)|^{-1}\T(\rho)$ and hence
\bqn
\lambda q_0=\lambda C\ell_G=C\lambda \ell_G\geq G\,,
\eqn
since $\lambda\ell_G=\frac{\T(\rho)\ell_G}{|\chi(\Sigma)|}\in\NN^\ast$.
Then for every $\gamma\geq_{\Sigma,q_0}e$ we obtain
\bqn
f_{\check G}(\widetilde\rho(\gamma))=\lambda f_\Sigma(\gamma)>\lambda q_0\geq C
\eqn
and hence $\widetilde\rho(\gamma)\in\check G^+$, $\widetilde\rho(\gamma)\neq e$.
Thus $\widetilde\rho$ is strictly order preserving for $\leq_{q_0,\Sigma}$.
\end{proof}

\begin{rem}\label{PerturbationStable} Theorem \ref{thm:thm3.6} is actually stable under perturbations of orders in the following sense. Assume that $\check G$ is an Archimedean order on $\check G^+$ sandwiched by $f_{\check G}$, and let
$\check G^+_g$ be a perturbation of  $\check G^+$ in the sense of Lemma \ref{Perturbation}. We claim that $\check G^+_g$ is sandwiched by $f_{\check G}$ as well. 

Indeed, we can choose  $C_0>0$ such that $\Nn_{C_0}(f_{\check G}) \subset \check G^{++}$. Now denote by $\|df\|_\infty$ the defect of $f$ and set $C := C_0 + f_{\check G}(g)+\|df\|_\infty$. If we assume that for some $\gamma \in G$ we have $f_{\check G}(\gamma) \geq C$, then for every $h \in \check G$ we have
\begin{eqnarray*}
f_{\check G}(g^{-1}h^{-1}\gamma h) &\geq& f_{\check G}(g^{-1}) + f_{\check G}(h^{-1}\gamma h) -\|df\|_\infty\\ &\geq& - f_{\check G}(g) + C - \|df\|_\infty = C_0.
\end{eqnarray*}
This shows that $g^{-1}h^{-1}\gamma h \in \check G^{++}$ and thus $\gamma \in h(g \check G^+)h^{-1}$. Since $h$ was chosen arbitrarily we deduce that $\gamma \in \check G^+_g$, which proves our claim.

As a consequence, Theorem \ref{thm:thm1.6} remains valid if we replace the continuous order under consideration by any of its perturbations.
\end{rem}

\section{The Shilov boundary and the causal ordering}\label{sec:causal_order} 
Throughout this section $G$ is a simple adjoint connected Lie group
of Hermitian type and we assume that the associated bounded symmetric
domain $\Dd$ is of tube type.  Recall that the Shilov boundary $\cs\cong G/Q$ is a
homogeneous space for $G$, where $Q$ is an appropriate maximal
parabolic subgroup. It was established by Kaneyuki \cite{Kaneyuki} 
that $\cs$ carries a $G$-invariant causal structure, unique up to inversion. 
The lift of this causal structure to the universal covering $\check R$ of $\cs$
defines an order on $\check R$ invariant under the action of $\check G$. In this section we will use this order on $\check R$ on the one hand 
to define an order on $\check G$ and on the other to define explicitly an integral valued
Borel quasimorphism on $\check G$ whose homogenization  is essentially the
quasimorphism $f_{\check G}$. We then use this construction to show that the order on $\check G$ is sandwiched
by the quasimorphism $f_{\check G}$. We also determine explicitly the set of dominant elements.
A similar construction of quasimorphisms starting with a causal structure 
has been pointed out by Calegari in \cite[5.2.4]{Calegari_scl} and in greater generality
by Ben Simon and Hartnick in \cite{BenSimon_Hartnick_quasitotal}.

\subsection{The Kaneyuki causal structure}\label{Sec:Kaneyuki}
We need to recall some basic properties of Kaneyuki's construction. 
Observe that any maximal compact subgroup $K$ of $G$ acts transitively on $\cs$. 
We denote by $M$ the stabilizer of the basepoint $o=eQ$, so that $\cs \cong K/M$. 
Consider now the holonomy representation of $\rho\colon M \to GL(V)$ on $V := T_{eQ}\cs$. 
According to \cite{Kaneyuki}, there exists an inner product $\langle \cdot, \cdot \rangle$ and an open cone $\Omega \subset V$ such that
\[\rho(M) = \{g \in O(V, \langle \cdot, \cdot \rangle)\,|\, g\Omega =\Omega\}.\]
The cone $\Omega$ is symmetric with respect to $\langle \cdot, \cdot \rangle$ in the sense of \cite[p. 4]{Faraut_Koranyi}. 
In particular $\Omega$ is self-dual, i.e.
\bq\label{SelfDualCone}
\Omega =\{w \in V\,|\, \langle v, w \rangle > 0 \text{ for all } v \in \overline{\Omega}\setminus \{0\}\}\,.
\eq
Furthermore, we deduce from \cite[Prop. I.1.9]{Faraut_Koranyi} 
that $\rho(M)$ fixes a unit vector $e$ in $\Omega$. 
As a consequence,  there exists a constant $k$ such that for all $w\in\overline\Omega$,
\bq\label{eq:uniform bound}
\langle e,w\rangle\geq k\|w\|\,.
\eq
Since $\langle \cdot, \cdot \rangle$, $e$ and $\overline{\Omega}$ are $M$-invariant 
they give rise respectively to a $K$-invariant Riemannian metric $g$, a $K$-invariant vector field $v$, 
and a $K$-invariant causal structure $\Cc$, all uniquely determined by their
definition at the basepoint
\bq\label{eq:spread}
g_o = \langle \cdot, \cdot \rangle, \quad v_o = e, \quad C_o = \overline{\Omega}\,.
\eq
For future reference we record here the following observation that follows
immediately from \eqref{eq:uniform bound} and the fact that the objects in \eqref{eq:spread} 
are $K$-invariant.
\begin{lemma}\label{lem:acute}  The cones $C_p$ are uniformly acute with respect to $g_p$, that is
there exists $k>0$ such that 
\bqn
g_p(w,v_p)\geq k\|w\|\,,
\eqn
for all $w\in C_p$ and for all $p\in\cs$.
\end{lemma}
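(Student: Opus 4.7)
The plan is to reduce the uniform inequality on all of $\cs$ to the single inequality \eqref{eq:uniform bound} at the basepoint $o$ by exploiting the $K$-invariance of the three data $g$, $v$, and $\Cc$ together with the transitivity of the $K$-action on $\cs = K/M$.

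First, I would fix an arbitrary point $p\in\cs$ and choose $k\in K$ with $p = k\cdot o$. Since $g$, $v$, and $\Cc$ are $K$-invariant and are determined at $o$ by the data in \eqref{eq:spread}, the differential $k_*\colon T_o\cs\to T_p\cs$ carries $\langle\cdot,\cdot\rangle$ to $g_p$, the vector $e$ to $v_p$, and the cone $\overline{\Omega}$ to $C_p$. Thus every $w\in C_p$ can be written as $w = k_*w'$ for a unique $w'\in\overline{\Omega}$, and the two identities
\bqn
g_p(w,v_p) = \langle w', e\rangle,\qquad \|w\|_{g_p} = \|w'\|
\eqn
hold by construction.

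Applying \eqref{eq:uniform bound} to $w'$ yields $\langle e, w'\rangle \geq k\|w'\|$, and substituting the two identities above gives $g_p(w,v_p)\geq k\|w\|_{g_p}$. Since $p$ was arbitrary and the constant $k$ from \eqref{eq:uniform bound} does not depend on $p$, this establishes the uniform bound claimed in the lemma.

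The only conceptual point that needs care is that the identification $p = k\cdot o$ is not unique (it is only well-defined modulo $M$), but the inequality we derive at $p$ is independent of the choice of $k$: any two such choices differ by an element of $M$, which preserves $\langle\cdot,\cdot\rangle$, $e$, and $\overline{\Omega}$ by the defining property of the holonomy representation. So there is no real obstacle, and the lemma follows from pure $K$-equivariance together with the base-point estimate \eqref{eq:uniform bound}.
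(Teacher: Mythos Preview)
Your proof is correct and follows exactly the approach the paper indicates: the lemma is recorded as an immediate consequence of the basepoint estimate \eqref{eq:uniform bound} together with the $K$-invariance of the data in \eqref{eq:spread}, and your argument simply spells out this reduction via the transitive $K$-action.
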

According to \cite{Kaneyuki}, the $K$-invariant causal structure defined above is $G$-invariant.
In the sequel we will also need to consider various coverings of $(\cs, \Cc)$. 
To this end we recall \cite{Clerc_Koufany} that $\pi_1(\cs)\cong\ZZ$, 
from which it is easily deduced that also $\pi_1(G)$ has rank one. 
However, in general the map $\pi_1(G)\to\pi_1(\cs)$ is not surjective or,
in other words, $Q$ may not be connected. 
To deal with this problem we introduce the finite covering $S':=G/Q^\circ$ of
$\cs$ and observe that the evaluation map 
\bqn 
\ba
G&\to S'\\
g&\mapsto gQ^\circ 
\ea 
\eqn induces an isomorphism 
\bq\label{eq:pi}
\pi_1(G)/\pi_1(G)_\mathrm{tor}\cong\pi_1(S')\,.
\eq
It follows that, if $\check G$ denotes the
connected covering of $G$ associated to $ \pi_1(G)_\mathrm{tor}$ as before,
then $\check G$ acts faithfully on the universal covering $\check R$ of $S'$, 
covering the $G$-action on $S'$.  
Moreover it follows from \eqref{eq:pi} that the action of the center $Z(\check G)$ of $\check G$
coincides with the $\pi_1(S')$-action.
Now the causal structure $\Cc$ on $\cs$ lifts to causal structures on $S'$ and $\check R$, 
invariant under the corresponding automorphism groups. 
We will abuse notation and denote all these causal structures by $\Cc$. 
With this abuse of notation understood, 
Lemma \ref{lem:acute} holds also for $S'$ and $\check R$.
\subsection{The causal order on $\check R$}\label{Sec:KanyukiOrder}
We recall the definition of a causal curve:
\begin{defi}\label{defi:causal curve}  Let $M$ be any covering of $\cs$.  A curve $\gamma\colon[a,b]\to M$ is {\em causal} if
it is piecewise $C^1$, with existing left and right tangents $\dot\gamma(t_+)$ and $\dot\gamma(t_-)$ 
at all points $t\in [a,b]$, and such that $\dot\gamma(t_\pm)\in C_{\gamma(t)}$ for all $t\in[a,b]$.
\end{defi}

Using causal curves, we now define a relation on $\check R$ as follows:
\begin{defi}\label{defi:causal order}  Let $x,y\in\check R$.  We say that 
$x\leq y$ if there is a causal curve from $x$ to $y$ and write $x < y$ if $x\leq y$ and $x \neq y$.
\end{defi}  
The relation $\leq$ is obviously transitive and $\check G$-invariant (since the underlying causal structure is), 
and we will see in Lemma~\ref{lem:orderinR} that it gives a $\check G$-invariant partial order on $\check R$.

We now define a $K$-invariant $1$-form $\alpha$ on $S'$ by 
\bq\label{eq:form}
\alpha_p(x):=g_p(x,v_p)\,,
\eq
for $x\in T_p{S'}$. (Here and in the sequel we abuse notation to denote the lifts of $v$ and $g$ from $\cs$ to $S'$ by the same letters.) 
Since $S'$ is symmetric and $\alpha$ is $K$-invariant we have, by Cartan's lemma,
\bq\label{eq:closed}
d\alpha=0\,.
\eq
Furthermore, let $p\colon\check R\to S'$ be the universal covering map,
$\check\alpha:=p^\ast(\alpha)$, and fix a smooth function $\zeta\colon\check R\to\RR$
such that $d\zeta=\check\alpha$.  If $\gamma\colon[0,1]\to\check R$ is a causal curve joining
$x$ to $y$, then it follows from Lemma~\ref{lem:acute} that there exists a constant $k>0$ such that
\bq\label{eq:integral}
\zeta(y)-\zeta(x)=\int_\gamma\check\alpha\geq k\cdot\mathrm{Length}(\gamma)\geq k\cdot d_{\check R}(x,y)\,,
\eq
where $d_{\check R}$ refers to the Riemannian metric on $\check R$ corresponding to the one on $S'$.
This gives immediately the following:

\begin{lemma}\label{lem:orderinR}  The relation $\leq$ is a partial order on $\check R$.
\end{lemma}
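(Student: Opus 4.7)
The plan is to verify the three defining properties of a partial order: reflexivity, transitivity, and antisymmetry.

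Reflexivity is immediate: for any $x \in \check R$, the constant curve $\gamma \equiv x$ on $[0,1]$ is piecewise $C^1$ with (trivially) zero tangent vectors, which lie in $C_x$ since each $C_p$ is a closed cone containing the origin; hence $x \leq x$. Transitivity follows by concatenation: given causal curves $\gamma_1$ from $x$ to $y$ and $\gamma_2$ from $y$ to $z$, their concatenation is still piecewise $C^1$ (we only introduce one additional break point at $y$, where the left and right tangents $\dot\gamma_1(1_-)$ and $\dot\gamma_2(0_+)$ both lie in $C_y$ by assumption), so it is causal from $x$ to $z$.

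The substantive point is antisymmetry, and this is where the inequality \eqref{eq:integral} does the work. Suppose $x \leq y$ and $y \leq x$, witnessed by causal curves $\gamma$ from $x$ to $y$ and $\gamma'$ from $y$ to $x$. Applying \eqref{eq:integral} to each yields
\[
\zeta(y) - \zeta(x) \geq k \cdot d_{\check R}(x,y), \qquad \zeta(x) - \zeta(y) \geq k \cdot d_{\check R}(y,x).
\]
Adding these gives $0 \geq 2k \cdot d_{\check R}(x,y)$, and since $k > 0$ we conclude $d_{\check R}(x,y) = 0$, hence $x = y$.

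The only step that might seem delicate is the use of \eqref{eq:integral}, but this has already been set up: the closedness of $\alpha$ (from \eqref{eq:closed}) guarantees that $\check\alpha = p^*\alpha$ admits a global primitive $\zeta$ on the simply connected cover $\check R$, and the uniform acuteness from Lemma \ref{lem:acute} together with the estimate $\alpha_p(w) = g_p(w, v_p) \geq k\|w\|$ for $w \in C_p$ gives the length bound in \eqref{eq:integral}. Thus the work has essentially already been done before the statement of the lemma, and the proof is a short verification.
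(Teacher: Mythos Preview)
Your proof is correct and follows essentially the same approach as the paper: transitivity via concatenation of causal curves, and antisymmetry via the inequality \eqref{eq:integral}. The paper's version is terser (it omits reflexivity and deduces $\zeta(x)=\zeta(y)$ first before invoking the distance bound), but the substance is identical.
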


\begin{proof} The relation is transitive since the concatenation of causal curves is causal.
If now $x\leq y$ and $y\leq x$, it follows from \eqref{eq:integral} that 
$\zeta(y)-\zeta(x)\geq0$ and $\zeta(x)-\zeta(y)\geq0$, hence $d_{\check R}(x,y)=0$.
\end{proof}

We record for future purposes that, integrating over a geodesic path from $x$ to $y$ and using that $v$ is unit length, 
we have also the inequality
\bq\label{eq:bound}
\zeta(y)-\zeta(x)\leq d_{\check R}(x,y)\,.
\eq

We now exploit the fact that $\leq$ is an order on $\check R$ but not on $S'$,
where causal curves can be closed.  However we first need to introduce some
objects that stem from an alternative description of the vector field $v$.  
To this purpose observe that the action of $Z(K)$ on $S'$ gives a locally free $S^1$-action; 
the unit length vector field generating this action is precisely $v_p$, see \cite[Proposition~I.1.9]{Faraut_Koranyi}.
Since $v_p\in C_p^\circ\subset C_p$, the integral curve 
\bq\label{eq:integralcurve} 
\gamma_p\colon[0,L]\to S' 
\eq
going through the point $p\in S'$ is a closed causal curve of length $L$ and 
hence $\gamma_p$ generates a subgroup of finite index in $\pi_1(S')$.  
Let $\hhs$ be the finite covering of $S'$ corresponding to this subgroup.  
For every $p\in\hhs$, the unique lift of $\gamma_p$ to a closed curve in $\hhs$ through
$p$ is now a generator $Z$ of $\pi_1(\hhs)$ that is causal and has also length $L$.

\begin{lemma}\label{lem:maxlengthofcausalcurves}  There exists a constant $D>0$ 
depending only on the Riemannian metric $g$ on $\hhs$ such that 
any two points in $\hhs$ can be joined by a causal path of length at most $D$.
\end{lemma}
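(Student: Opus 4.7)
The plan is to exploit two complementary features of the geometry of $\hhs$: the vector field $v$ lies in the interior of $\Cc$ in a uniform way (Lemma~\ref{lem:acute}), and the $v$-flow on $\hhs$ is periodic of period $L$ by construction. Since $\hhs$ is a finite cover of the compact space $\cs$, it is itself compact, so its Riemannian diameter $D_0 := \diam(\hhs)$ is finite.

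The core mechanism I would use is a \emph{causalization} principle: any smooth curve can be made causal by adding sufficient motion along $v$. Concretely, if one writes a tangent vector at $p$ as $u = \alpha v_p + u^\perp$ with $u^\perp \perp v_p$, the self-duality of $C_p$ expressed in \eqref{SelfDualCone}, combined with Lemma~\ref{lem:acute}, yields $g_p(u,w) \geq (\alpha k - \|u^\perp\|)\|w\|$ for all $w \in \overline{C_p}$, so $u \in \overline{C_p}$ as soon as $\alpha \geq k^{-1}\|u^\perp\|$. Moreover, the $v$-flow $\Phi_s$ acts by isometries of $g$, since the metric is $K$-invariant and $v$ is generated by an element of $Z(K)$.

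With these tools in hand, given $p,q \in \hhs$ I would choose a smooth path $\gamma\colon[0,1]\to\hhs$ from $p$ to $q$ of length at most $D_0$ and set $\tilde\gamma(t):=\Phi_{s(t)}(\gamma(t))$, where $s$ is non-decreasing with $\dot s(t) = k^{-1}\|\dot\gamma(t)\|$. Because $\Phi_{s(t)}$ is an isometry, the component of $(d\Phi_{s(t)})(\dot\gamma(t))$ orthogonal to $v_{\tilde\gamma(t)}$ has norm at most $\|\dot\gamma(t)\|$, so the causalization criterion forces $\dot{\tilde\gamma}(t) \in \overline{C_{\tilde\gamma(t)}}$. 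Hence $\tilde\gamma$ is a causal curve from $p$ to $\Phi_{s(1)}(q)$ of length at most $(1+k^{-1})D_0$. To reach $q$ on the nose, I would concatenate with a forward arc of the periodic $v$-flow of duration at most $L$, which is a causal unit-speed curve. The resulting causal path from $p$ to $q$ then has length at most $D := (1+k^{-1})D_0 + L$, a constant depending only on $g$.

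The main technical obstacle is the causalization criterion, though as indicated it reduces to a short computation using self-duality and uniform acuteness. A secondary point to justify is that the $v$-flow genuinely descends to a well-defined $S^1$-action on $\hhs$ so that the ``forward arc of duration at most $L$'' step is well posed; this is automatic from the definition of $\hhs$ as the cover of $S'$ corresponding to the cyclic subgroup of $\pi_1(S')$ generated by $\gamma_p$, since this subgroup is $Z(K)$-invariant (any two integral curves of $v$ are freely homotopic on $S'$).
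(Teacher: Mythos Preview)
Your argument is correct and takes a genuinely different route from the paper's. The paper proceeds in two stages: first it shows a \emph{local} connectivity statement --- every point can be connected to any point within a fixed $\eta$-ball by a causal path of length at most $L+1$, obtained as a Riemannian geodesic $t\mapsto \exp(tv)$ with $v$ a small perturbation of $Lv_q$ (using that parallel transport, realized by the $K''$-action, preserves the causal structure) --- and then chains these local steps along a discretized minimizing geodesic to get the global bound $D = (\lfloor 2\,\diam(\hhs)/\eta\rfloor + 1)(L+1)$. Your approach is instead a single global ``drift'' construction: you push an arbitrary short path along the $v$-flow fast enough to make its velocity causal, then close up with at most one extra $v$-period. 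This is more direct, yields an explicit constant $D=(1+k^{-1})D_0 + L$, and uses self-duality together with the Killing property of $v$ in place of the paper's parallel-transport argument; both approaches ultimately rest on the same homogeneity of the $K$-geometry.

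One small point of presentation: you state the causalization criterion for the \emph{orthogonal} decomposition $u=\alpha v_p + u^\perp$, but when you apply it you are really using the decomposition $\dot{\tilde\gamma}(t) = \dot s(t)\,v_{\tilde\gamma(t)} + (d\Phi_{s(t)})(\dot\gamma(t))$, whose second summand need not be orthogonal to $v$ (and may have negative $v$-component). This is harmless, since your inequality $g_p(u,w)\geq(\alpha k - \|W\|)\|w\|$ follows from Lemma~\ref{lem:acute} and Cauchy--Schwarz for \emph{any} splitting $u=\alpha v_p + W$, not only the orthogonal one; taking $\alpha=\dot s(t)=k^{-1}\|\dot\gamma(t)\|$ and $\|W\|=\|\dot\gamma(t)\|$ then gives exactly what you need. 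Just drop the orthogonality hypothesis from the criterion (or, if you want to keep the orthogonal formulation, increase $\dot s$ to $(1+k^{-1})\|\dot\gamma\|$ to absorb the possible negative $v$-component).
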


\begin{proof} First we show that there exists $\eta>0$ such that 
every $q\in\hhs$ can be joined to the points in the open $\eta$-ball in $\hhs$
   with center in $q$  by a causal path of length at
  most $L+1$, where $L$ is defined in \eqref{eq:integralcurve}.  
  Note that the causal curve will not necessarily be contained in the ball.
  
  Let $K''$ be an appropriate covering of $K$ 
  that acts effectively and transitively on $\hhs$, 
  and let $M''$ be the stabilizer of $q$.  
  For a suitable generator $Y$ of the Lie algebra of $Z(K'')$, 
  we have that 
  \bqn
  \gamma_q(t)=\operatorname{Exp} tY=\exp tv_q\,,
  \eqn
  where $\exp$ is the Riemannian exponential map.  
  As a result, $\exp Lv_q=q$.
  
  Let now $0<\epsilon<1$ be such that the ball $B_\epsilon(Lv_q)$ with radius $\epsilon$
  in the tangent space at $q\in\hhs$ is contained in $C_q$ and
  let $\eta>0$ such that $B_\eta(q)\subset\exp(B_\epsilon(Lv_q))$.
  Given now any $v\in C_q$, we observe that the curve $t\mapsto\exp tv$, for $t\geq0$,
  is causal.  Indeed, the causal structure is left invariant by the parallel transport, 
  since it is realized by left multiplication of appropriate elements of $K''$.
  Thus for $v\in C_q$ of length $1$ such that  $\ell_vv\in B_\epsilon(Lv_q)$, 
  the assignment $t\mapsto\exp tv$ from $[0,\ell_v]$ to $\hhs$
  gives the causal curve joining $q$ to $\exp \ell_vv$.

  Let then $d=\diam(\hhs)$.  Given two points $p,q\in\hhs$, choose
  a distance minimizing geodesic $c\colon[0,d(p,q)]\to\hhs$ parametrized
  by arc length. Then it follows from the above claim that for every
  $n\in\NN$ with $n\frac{\eta}{2}\leq d(p,q)$, $c((n-1)\eta/2)$ can be
  joined to $c(n\eta/2)$ by a causal path of length at most $L+1$.
  Thus $p=c(0)$ can be joined to $q=c(d(p,q))$ by a causal path of
  length at most
  $D:=\left(\left[\frac{2d}{\eta}\right]+1\right)(L+1)$.
\end{proof}

\subsection{The generalized translation number}\label{Sec:GTransNum}
With the aid of the invariant order on $\check R$, 
we now proceed to the construction of the quasimorphism on $\check G$.
We start by observing that since $Z\in\pi_1(\hhs)$ can be represented 
by the closed causal curve $\gamma_p$ for every $p\in \hhs$,
then
\bq\label{eq:Z(a)geqa}
Z(x)\geq x
\eq for every $x\in\check R$.

Given $x,y\in\check R$, let us define
\bqn
I(x,y):=\{n\in\ZZ\colon\,Z^ny\geq x\}\,.
\eqn

It follows immediately from \eqref{eq:Z(a)geqa} that 
if $m\in I(x,y)$, then $Z^{m+1}y\geq Zx\geq x$, which shows that $m+1\in I(x,y)$.
In order to study further properties of the set $I$, we will need the function $\zeta$ defined above, 
in particular the property  that, for all $x\in\check R$, and all $n\in\ZZ$, 
\bq\label{eq:q-m}
\zeta(Z^nx)=n\,L+\zeta(x)
\eq
since $L>0$ is the integral of $\alpha$ over $\gamma_p$.

\begin{lemma}\label{lem:height}  With the above notation
\be
\item $\iota(x,y):=\min I(x,y)$ is well defined and $I(x,y)=[\iota(x,y),\infty)\cap\ZZ$.
\item $\iota$ is invariant under the diagonal $\check G$-action on $\check R\times\check R$.
\item $|L\iota(x,y)-(\zeta(x)-\zeta(y))|\leq D$ for all $x,y\in\check R$, where $D$ is given by 
Lemma~\ref{lem:maxlengthofcausalcurves}.
\ee
\end{lemma}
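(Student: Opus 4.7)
The plan is to prove the three assertions in order, since each relies on the preceding one and on the integral estimates \eqref{eq:integral} and \eqref{eq:bound} together with Lemma~\ref{lem:maxlengthofcausalcurves}.

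For assertion (1), I would first observe that $I(x,y)$ is upward closed (as already noted in the text), so it suffices to show that it is nonempty and bounded below. For nonemptiness I would project $x,y$ to $\hhs$, apply Lemma~\ref{lem:maxlengthofcausalcurves} to obtain a causal curve of length at most $D$ from $\pi(x)$ to $\pi(y)$ in $\hhs$, and lift this curve starting at $x\in\check R$. The endpoint $y'$ projects to $\pi(y)$, hence equals $Z^k y$ for some $k\in\ZZ$ since $\langle Z\rangle=\pi_1(\hhs)$ acts as the deck group of $\check R\to\hhs$. By construction $Z^k y\geq x$, so $k\in I(x,y)$. For the lower bound, if $n\in I(x,y)$ then $Z^n y\geq x$, so by \eqref{eq:integral},
\bqn
0\leq \zeta(Z^n y)-\zeta(x) = nL+\zeta(y)-\zeta(x),
\eqn
using \eqref{eq:q-m}. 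Thus $n\geq (\zeta(x)-\zeta(y))/L$, so $I(x,y)$ has a minimum.

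For assertion (2), since $Z$ lies in $\pi_1(S')\cong Z(\check G)$ by \eqref{eq:pi} and the discussion following it, $Z$ commutes with every element of $\check G$. Combined with the fact that the $\check G$-action preserves the causal structure on $\check R$, hence the order $\leq$, I obtain for any $g\in\check G$ the equivalence $Z^n(gy)\geq gx\iff g Z^n y\geq gx\iff Z^n y\geq x$, which gives $I(gx,gy)=I(x,y)$ and hence the invariance of $\iota$.

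For assertion (3), the lower estimate $L\iota(x,y)-(\zeta(x)-\zeta(y))\geq 0$ is immediate from the computation in (1) applied to $n=\iota(x,y)$. For the upper estimate I reuse the lifting argument: the point $y'=Z^k y$ constructed via Lemma~\ref{lem:maxlengthofcausalcurves} satisfies $k\in I(x,y)$, so $\iota(x,y)\leq k$, and the causal curve from $x$ to $y'$ has length at most $D$ (since the covering map is a local isometry), so $d_{\check R}(x,y')\leq D$. Applying \eqref{eq:bound} gives
\bqn
\zeta(y')-\zeta(x)= kL+\zeta(y)-\zeta(x)\leq D,
\eqn
hence $L\iota(x,y)\leq kL\leq \zeta(x)-\zeta(y)+D$, which together with the lower bound yields the claimed inequality. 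The main technical point is the double use of the covering $\check R\to\hhs$: the projection provides an upper bound on distances via Lemma~\ref{lem:maxlengthofcausalcurves}, while the deck action of $\langle Z\rangle$ on $\check R$ is what makes the set $I(x,y)$ meaningful in the first place.
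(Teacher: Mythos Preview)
Your proof is correct and follows essentially the same approach as the paper: the lower bound in (1) and (3) via \eqref{eq:integral} and \eqref{eq:q-m}, the invariance in (2) from centrality of $Z$ and $\check G$-invariance of the order, and the upper bound in (3) by lifting a causal curve from Lemma~\ref{lem:maxlengthofcausalcurves} and applying \eqref{eq:bound}. The only organizational difference is that you establish nonemptiness of $I(x,y)$ explicitly in part (1) using the lifting construction, whereas the paper only proves the lower bound there and defers the lifting argument (which in particular yields nonemptiness) to part (3); your ordering is arguably cleaner.
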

\begin{proof}  (1) If $n\in I(x,y)$, then, by \eqref{eq:integral},
\bqn
\zeta(Z^ny)-\zeta(x)\geq0\,,
\eqn
and thus, by using \eqref{eq:q-m}
\bqn
n\,L-(\zeta(x)-\zeta(y))\geq0\,.
\eqn
Thus $n \geq\frac{1}{L}(\zeta(x)-\zeta(y))$, 
and since $n\in I(x,y)$ is arbitrary, then 
\bq\label{eq:ineqh}
L\iota(x,y)-(\zeta(x)-\zeta(y))\geq0\,.
\eq

\medskip
\noindent
(2) This follows immediately from the fact 
that the $\check G$-action commutes with $Z$ and preserves the order.

\medskip
\noindent
(3) Let  $c\colon[0,D]\to\hhs$ be a causal curve of length at most $D$
joining $p(x)$ to $p(y)$, 
where $p\colon\check R\to\hhs$ is the canonical projection (see Lemma~\ref{lem:maxlengthofcausalcurves}).
Let $\tilde c\colon[0,D]\to\check R$ be the unique continuous lift of $c$ with $c(0)=x$.
Then $\tilde c(D)=Z^ny$ for some $n\in\ZZ$.  Applying \eqref{eq:bound}, we get 
\bqn
\zeta(Z^ny)-\zeta(x)\leq d_{\check R}(Z^ny,x)\leq D\,,
\eqn
which, taking into account \eqref{eq:q-m}, implies that 
\bqn
n\, L+\zeta(y)-\zeta(x)\leq D\,,
\eqn
that is 
\bqn
L\,\iota(x,y)-(\zeta(x)-\zeta(y))\leq D\,.
\eqn
This inequality and \eqref{eq:ineqh} conclude the proof.
\end{proof}
The function $\iota$ is an example of an abstract height function of a causal covering 
in the sense of \cite{BenSimon_Hartnick_quasitotal}, where a general theory of such functions is developed. 
It follows from this general theory that for any $x\in\check R$ the function $R_x\colon\check G\to\ZZ$ given by 
\bqn
R_x(g):=\iota(gx,x)
\eqn
is a quasimorphism and that all these quasimorphisms are mutually at bounded distance.  In the present case it is actually easy to derive these properties directly:
\begin{lemma}\label{lem:propertiesh}  For all $x,y\in\check R$, 
all $g,h\in\check G$ and all $n\in\ZZ$, we have:
\be
\item $R_x(Z^n)=n$;
\item $0\leq R_x(g)+R_x(g^{-1})\leq 2D/L$;
\item $|R_x(gh)-R_x(g)-R_x(h)|\leq 3D/L$;
\item $|R_x(g)-R_y(g)|\leq 4D/L$.
\ee
\end{lemma}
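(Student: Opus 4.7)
The plan is to treat (1) as an essentially combinatorial statement coming directly from the definition of $\iota$ and the freeness of the covering action, and then to deduce (2), (3) and (4) as short telescoping consequences of the coarse identity
\[
L\,\iota(x,y) \;=\; \zeta(x) - \zeta(y) + \varepsilon(x,y), \qquad |\varepsilon(x,y)| \le D,
\]
already established in Lemma~\ref{lem:height}(3), together with the $\check G$-invariance $\iota(kx, ky) = \iota(x,y)$ from Lemma~\ref{lem:height}(2).

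For (1), I would expand $R_x(Z^n) = \min\{m : Z^m x \ge Z^n x\}$ and, setting $w := Z^n x$, reduce the problem to computing $\min\{k : Z^k w \ge w\}$. Since $Z$ lifts a closed causal loop, $Z w \ge w$, so this minimum is at most $0$. The point is to exclude $k < 0$: in that case $Z^k w \ge w$ combined with $Z^{-k} w \ge w$ (which holds since $-k > 0$) and antisymmetry of $\le$ on $\check R$ (Lemma~\ref{lem:orderinR}) would force $Z^{-k} w = w$, contradicting the freeness of the $\pi_1(S')$-action on $\check R$. Hence the minimum is exactly $0$ and $R_x(Z^n) = n$.

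For (3), $\check G$-invariance rewrites $\iota(hx, x) = \iota(ghx, gx)$, so
\[
R_x(gh) - R_x(g) - R_x(h) \;=\; \iota(ghx, x) - \iota(gx, x) - \iota(ghx, gx).
\]
Multiplying by $L$ and substituting the coarse identity for each of the three $\iota$'s, the $\zeta$-terms telescope to exactly zero and three error terms, each bounded by $D$, are left, yielding the bound $3D/L$. Statement (4) is similar but invokes the coarse identity four times: once for each of $\iota(gx, x)$, $\iota(gy, y)$, $\iota(x, y)$ and $\iota(gx, gy)$. Equating the two coarse expressions for $L\iota(x,y)$ and $L\iota(gx, gy)$, which are equal by $\check G$-invariance, yields
\[
\zeta(gx) - \zeta(gy) - \zeta(x) + \zeta(y) \;=\; \varepsilon(x,y) - \varepsilon(gx, gy),
\]
which is precisely what is needed to cancel the $\zeta$-difference in $L(R_x(g) - R_y(g))$, leaving a total error of at most $4D$.

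For (2), the upper bound follows the same telescoping pattern: $\iota(g^{-1} x, x) = \iota(x, gx)$ by $\check G$-invariance, so the coarse identities for $L\iota(gx, x)$ and $L\iota(x, gx)$ have opposite $\zeta$-parts and sum to an error of at most $2D$. The lower bound is modeled on (1): writing $a := \iota(gx, x)$ and $b := \iota(x, gx)$, the defining inequalities $Z^a x \ge gx$ and $Z^b gx \ge x$ compose to $Z^{a+b} gx \ge gx$, and the freeness-plus-antisymmetry argument from (1) forces $a + b \ge 0$. The only step that carries real content beyond bookkeeping is this freeness-plus-antisymmetry step used in (1) and the lower half of (2); I expect it to be the sole subtle point of the proof.
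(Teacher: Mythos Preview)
Your proposal is correct and follows essentially the same approach as the paper: part~(1) via the monotonicity $Zx\geq x$ together with antisymmetry and freeness of the deck action, and parts~(2)--(4) by telescoping the coarse identity $L\iota(x,y)=\zeta(x)-\zeta(y)+O(D)$ from Lemma~\ref{lem:height}(3) after an appropriate use of $\check G$-invariance. The paper's proof differs only in cosmetic choices (e.g.\ it rewrites $R_x(gh)$ using $g^{-1}$-invariance rather than $g$-invariance) and in justifying the lower bound of~(2) by citing $Z(x)\geq x$ directly rather than spelling out the freeness-plus-antisymmetry step you describe.
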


\begin{proof} (1) follows from the fact that $Z(x)\geq x$ and $Z^{-1}(x)\leq x$ 
for all $x\in\check R$.

\medskip
\noindent
(2)  
Using Lemma~\ref{lem:height}(3), we get
\bqn
L|\iota(x,g^{-1}x)+\iota(g^{-1}x,x)|\leq2D\,,
\eqn
which implies the right inequality since $\iota(x,g^{-1}x)=\iota(gx,x)$.

To see the left inequality observe that 
if $Z^ny\geq x$ and $Z^mx\geq y$, then $Z^{n+m}y\geq Z^mx\geq y$, and 
\eqref{eq:Z(a)geqa} implies that $n+m\geq0$.

\medskip
\noindent
(3)  Using repeatedly Lemma~\ref{lem:height}(2) and (3) we obtain:
\bqn
\ba
   &L|R_x(gh)-R_x(g)-R_x(h)|\\
=&|L\iota(hx,g^{-1}x)-L\iota(x,g^{-1}x)-L\iota(hx,x)|\\
=&\big|[L\iota(hx,g^{-1}x)-(\zeta(hx)-\zeta(g^{-1}x))]\\
	&\qquad+[L\iota(x,g^{-1}x)-(\zeta(x)-\zeta(g^{-1}x))]\\
	&\qquad+[L\iota(hx,x)-(\zeta(hx)-\zeta(x))]\big|\leq 3D\,.
\ea
\eqn

\medskip
\noindent
(4)  Again from Lemma~\ref{lem:height}(2) and (3)
\bqn
\ba
  &L|R_x(g)-R_y(g)|\\
=&L|R_x(g)-\iota(gx,gy)+\iota(x,y)-R_y(g)|\\
=&\big|[L\iota(gx,x)-(\zeta(gx)-\zeta(x))]\\
&\qquad-[L\iota(gx,gy)-(\zeta(gx)-\zeta(gy))]\\
&\qquad+[L\iota(x,y)-(\zeta(x)-\zeta(y))]\\
&\qquad-[L\iota(gy,y)-(\zeta(gy)-\zeta(y))]\big|\leq4D\,.
\ea
\eqn
\end{proof}

We consider now the homogenization
\bqn
\psi(g):=\lim_{n\to\infty}\frac{R_x(g^n)}{n}
\eqn
of the quasimorphism $R_x$. Notice that, because of Lemma~\ref{lem:propertiesh}(4),
$\psi(g)$ does not depend on $x\in\check R$.

\begin{prop}\label{prop:sandwich}  The map $\psi\colon\check G\to\RR$ is a continuous quasimorphism
satisfying the following properties:
\be
\item $\psi(Z^n)=n$, for $n\in\ZZ$;
\item $\|\psi-R_x\|_\infty\leq 3D/L$;
\item $\psi$ sandwiches $G^+$, in fact $\{g\in G\colon\,\psi(g)\geq 5D/L\}\subset G^+$.
\ee
\end{prop}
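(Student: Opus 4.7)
The plan is to deduce properties~(1)--(3) from Lemma~\ref{lem:propertiesh} together with the auxiliary function $\zeta$ constructed in Section~\ref{Sec:KanyukiOrder}. The first step is to observe that $\psi$ is well-defined as a homogeneous quasimorphism and satisfies $\|\psi - R_x\|_\infty \leq 3D/L$: this is the standard property of homogenizations, applied to $R_x$, whose defect is at most $3D/L$ by Lemma~\ref{lem:propertiesh}(3). This gives property~(2). Property~(1) then follows immediately: since $Z$ is central in $\check G$, $(Z^n)^k = Z^{nk}$, so Lemma~\ref{lem:propertiesh}(1) gives $R_x((Z^n)^k) = nk$, whence $\psi(Z^n) = \lim_k nk/k = n$.

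The first genuine issue is continuity of $\psi$, since $R_x$ is integer-valued and hence discontinuous. I would circumvent this by introducing the continuous auxiliary function
\[
\phi(g) := \tfrac{1}{L}\bigl(\zeta(gx) - \zeta(x)\bigr),
\]
whose continuity follows from that of $\zeta$ and of the $\check G$-action on $\check R$. Applying Lemma~\ref{lem:height}(3) to the pair $(gx, x)$ yields $\|\phi - R_x\|_\infty \leq D/L$, so $\phi$ is itself a quasimorphism at bounded distance from $R_x$, and in particular shares the same homogenization $\psi$. The standard uniform estimate $|\phi(g^n)/n - \psi(g)| \leq D_\phi/n$, where $D_\phi$ denotes the defect of $\phi$, then exhibits $\psi$ as a uniform limit of continuous functions on $\check G$, hence continuous.

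For the sandwiching statement~(3), I would assume $\psi(g) \geq 5D/L$. Homogeneity yields $\psi(g^{-1}) = -\psi(g) \leq -5D/L$, and combining with~(2) gives, for every $x \in \check R$,
\[
R_x(g^{-1}) \;\leq\; \psi(g^{-1}) + \tfrac{3D}{L} \;\leq\; -\tfrac{2D}{L} \;<\; 0.
\]
Since $R_x(g^{-1})$ is an integer this forces $R_x(g^{-1}) \leq 0$. Using $\check G$-invariance of $\iota$ (Lemma~\ref{lem:height}(2)) I rewrite $R_x(g^{-1}) = \iota(g^{-1}x, x) = \iota(x, gx)$, and the inequality $\iota(x, gx) \leq 0$ translates directly into $gx \geq x$ by the very definition of $I(x, gx)$. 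Since $x$ was arbitrary, $g \in \check G^+$.

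Of these steps, only the continuity argument poses a genuine obstacle: it requires replacing the discontinuous integer-valued quasimorphism $R_x$ by its continuous analogue $\phi$ built out of $\zeta$ before performing the homogenization. The remaining properties (1), (2), and (3) then reduce to formal manipulation of the quasimorphism inequality, the centrality of $Z$ in $\check G$, and the elementary translation between $\iota(x, gx) \leq 0$ and $gx \geq x$.
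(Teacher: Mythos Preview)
Your proof is correct and covers the same ground as the paper's, with two small but genuine differences worth noting.

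For continuity, the paper does not go through the auxiliary function $\phi(g)=\tfrac{1}{L}(\zeta(gx)-\zeta(x))$; instead it simply observes that $R_x$ is Borel and invokes the general fact \cite[Lemma~7.4]{Burger_Iozzi_Wienhard_tol} that a homogeneous Borel quasimorphism on a locally compact group is automatically continuous. Your route is more self-contained and avoids having to check that $R_x$ is Borel, at the cost of writing down $\phi$ and the uniform-limit argument explicitly; either approach is perfectly acceptable.

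For part~(3), the paper argues slightly differently: from $\psi(g)\geq 5D/L$ and~(2) it gets $R_x(g)\geq 2D/L$, and then uses Lemma~\ref{lem:propertiesh}(2), namely $R_x(g)+R_x(g^{-1})\leq 2D/L$, to conclude $R_x(g^{-1})\leq 0$. You instead pass through $\psi(g^{-1})=-\psi(g)$ and apply~(2) directly to $g^{-1}$. Both arguments yield $\iota(x,gx)=R_x(g^{-1})\leq 0$, and then $gx\geq x$ follows from Lemma~\ref{lem:height}(1) exactly as you say.
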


\begin{proof}  Since $\psi$ is the homogenization of $R_x$,
then (1) and (2) follow respectively from Lemma~\ref{lem:propertiesh}(1) and (3).
The fact that $\psi$ is continuous follows from the fact
that it is a homogeneous Borel quasimorphism \cite[Lemma~7.4]{Burger_Iozzi_Wienhard_tol}.  

If now $\psi(g)\geq5D/L$, then it follows from (2) that $R_x(g)\geq2D/L$.  
Lemma~\ref{lem:propertiesh}(2) then implies that $\iota(x,gx)=R_x(g^{-1})\leq0$ and 
hence $gx\geq x$ for all $x\in\check R$. The assertion follows now from Lemma~\ref{lem:gp}.
\end{proof}

\begin{cor}\label{QMUnique} There exists $\lambda \in \RR^\times$ such that $\psi = \lambda \cdot f_{\check G}$.
\end{cor}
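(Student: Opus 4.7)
The plan is to deduce this immediately from Corollary \ref{cor:sandwich}(2), once we have verified that both $\psi$ and $f_{\check G}$ sandwich one and the same bi-invariant order semigroup on $\check G$. The natural candidate is the causal order semigroup $\check G^+ := \{g \in \check G \colon gx \geq x \text{ for all } x \in \check R\}$ introduced in Subsection \ref{Sec:KanyukiOrder}.

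First, note that Proposition \ref{prop:sandwich}(3) already establishes that $\psi$ sandwiches $\check G^+$, with explicit sandwich constant $5D/L$. In particular the dominant set of $\check G^+$ is nonempty, since by Proposition \ref{prop:sandwich}(1) the central element $Z^n$ satisfies $\psi(Z^n)=n$, so for $n$ sufficiently large $Z^n$ lies in the dominant set (cf.\ Lemma \ref{lem:sandwich}). This is exactly the hypothesis needed in order to run the relative growth function construction underlying Corollary \ref{cor:sandwich}.

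Next, I would invoke the fact, recalled in Subsection \ref{Subsec1.2}, that on a connected simple Lie group of Hermitian type with finite center, the canonical homogeneous quasimorphism $f_{\check G}$ sandwiches every continuous bi-invariant order on $\check G$, as proved in \cite{BenSimon_Hartnick_Comm}. To apply this statement to $\check G^+$, one must check that the causal order is continuous, i.e.\ that $\check G^+$ is closed in $\check G$. This is where the main (minor) obstacle lies, but it follows directly from the pointwise description $\check G^+ = \{g \in \check G \colon gx \geq x \text{ for all } x \in \check R\}$: for each fixed $x$, the set $\{g \colon gx \geq x\}$ is closed because the relation $\leq$ on $\check R$ is closed (equivalently, by \eqref{eq:integral}, it can be characterized by $\zeta(gx) \geq \zeta(x)$ together with the closedness of the forward cones $C_p$), and intersections of closed sets are closed.

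With both $\psi$ and $f_{\check G}$ shown to sandwich the same semigroup $\check G^+$, Corollary \ref{cor:sandwich}(2) immediately produces a real number $\lambda > 0$ with $\psi = \lambda \cdot f_{\check G}$, and in particular $\lambda \in \RR^\times$, completing the proof.
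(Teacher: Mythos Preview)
Your approach is genuinely different from the paper's. The paper argues in two lines: the space of continuous homogeneous quasimorphisms on $\check G$ is one-dimensional (a fact cited from \cite{Burger_Iozzi_Wienhard_tol}), so $\psi = \lambda f_{\check G}$ for some $\lambda \in \RR$, and $\psi(Z)=1\neq 0$ forces $\lambda\neq 0$. Your route via Corollary~\ref{cor:sandwich}(2) is more self-contained in that it avoids this external one-dimensionality result and instead uses the machinery developed earlier in the paper; it would even yield $\lambda>0$ under the chosen sign conventions.

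However, your argument has a gap at the point where you claim $\check G^+$ is closed. Your parenthetical justification is incorrect: \eqref{eq:integral} only shows that $gx\geq x$ \emph{implies} $\zeta(gx)\geq\zeta(x)$, not the converse, so you cannot characterize the relation $\leq$ on $\check R$ by this inequality. Closedness of the causal relation on $\check R$ (equivalently, closedness of causal futures) is a nontrivial statement that does not follow immediately from closedness of the cones $C_p$; it requires an Arzel\`a--Ascoli type argument using the uniform length bound in \eqref{eq:integral}, or alternatively an identification of the causal order with one of the continuous orders in the Olshanski classification (via the invariant cone in the Lie algebra). Either route can be made to work, but neither is the one-liner you wrote. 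The paper's dimension argument sidesteps this issue entirely.
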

\begin{proof}
Since the space of continuous homogeneous quasimorphisms on $\check G$ is one-dimensional (see \cite{Burger_Iozzi_Wienhard_tol}) there exists $\lambda \in \RR$ such that $\psi = \lambda \cdot f_{\check G}$. Since $\psi(Z) \neq 0$ we have $\lambda \neq 0$.
\end{proof}

\subsection{The dominants and the Archimedean order associated with the causal order}\label{Sec:dominants}

We can now identify the set of dominants of the causal order $\preceq$. The result extends to a general Lie group of Hermitian type
the statement in Lemma~\ref{lem:homeo} for $\homrp$ and the Poincar\'e translation 
quasimorphism $\tau$.  The proof is also very similar to that of Lemma~\ref{lem:homeo},
where here the function $\zeta$ plays an important role.
\begin{prop}\label{lem:dominant}  
$\check G^{++}=\{g\in\check G\colon\,g(x)>x\text{ for all }x\in\check R\}$.
\end{prop}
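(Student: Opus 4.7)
My plan is to prove both inclusions, using that $\psi$ sandwiches $\check G^+$ (Proposition~\ref{prop:sandwich}) and hence, by Lemma~\ref{lem:sandwich}, $\check G^{++} = \{g\in \check G^+ : \psi(g) > 0\}$.

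For the inclusion $\check G^{++} \subseteq \{g : gx > x \text{ for all } x\in \check R\}$: any $g \in \check G^{++}$ lies in $\check G^+$, so $gx \geq x$ for all $x \in \check R$, and it only remains to exclude equality. By definition of dominance there exists $n \geq 1$ with $g^n \succeq Z$, i.e.\ $g^n x \geq Zx$ for every $x \in \check R$. If $gx_0 = x_0$ for some $x_0$, then $g^n x_0 = x_0$ yields $x_0 \geq Zx_0$; but $Zx_0 \geq x_0$ by \eqref{eq:Z(a)geqa} and $Zx_0 \neq x_0$ since $\zeta(Zx_0) = L + \zeta(x_0) \neq \zeta(x_0)$, contradicting the antisymmetry of $\leq$ on $\check R$ (Lemma~\ref{lem:orderinR}).

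For the reverse inclusion, suppose $gx > x$ for every $x \in \check R$. Then $g \in \check G^+$, and it suffices to show $\psi(g) > 0$. The strategy is parallel to the compactness argument in Lemma~\ref{lem:homeo}, replacing the circle by the compact quotient $\hhs = \check R/\langle Z \rangle$; here I use that $Z$ generates $\pi_1(\hhs)$, that $\hhs$ is a finite cover of the compact space $S'$, and that $Z$ lies in the center of $\check G$ (so that $Zg = gZ$). I would introduce the function $h\colon \check R \to \RR$ defined by $h(x) := \zeta(gx) - \zeta(x)$ and verify three properties: $h$ is continuous, $h$ is strictly positive (apply \eqref{eq:integral} to a causal curve witnessing $gx > x$, noting $gx \neq x$ forces positive length), and $h$ is $\langle Z\rangle$-invariant (by $Zg = gZ$ together with \eqref{eq:q-m}). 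Consequently $h$ descends to a positive continuous function on the compact space $\hhs$, hence admits a uniform lower bound $h \geq \eps$ for some $\eps > 0$.

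The proof would then conclude by a telescoping estimate: $\zeta(g^n x) - \zeta(x) = \sum_{i=0}^{n-1} h(g^i x) \geq n \eps$, and combining this with Lemma~\ref{lem:height}(3) yields $L\, R_x(g^n) \geq n\eps - D$, so $\psi(g) = \lim_n R_x(g^n)/n \geq \eps/L > 0$. The real work is concentrated in this second direction, namely in passing from the pointwise strict positivity $gx > x$ to a uniform positive lower bound on $h$; the essential input is the centrality of $Z$ in $\check G$ together with the compactness of $\hhs$, without which mere pointwise positivity of $h$ would not suffice to force $\psi(g) > 0$.
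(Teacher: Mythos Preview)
Your proof is correct and follows essentially the same route as the paper: both reduce to the characterization $\check G^{++} = \{g \in \check G^+ : \psi(g) > 0\}$ and exploit compactness of $\hhs = \check R/\langle Z\rangle$ together with the telescoping identity $\zeta(g^n x) - \zeta(x) = \sum_{i=0}^{n-1} h(g^i x)$ for $h(x)=\zeta(gx)-\zeta(x)$. The only differences are organizational: for the forward inclusion you invoke dominance over $Z$ directly rather than the limit formula \eqref{eq:qmlikeP} for $\psi$, and for the reverse inclusion you argue directly via a uniform positive lower bound on $h$, whereas the paper argues by contraposition (assuming $\psi(g)=0$, it extracts a subsequence with $h(g^{i_n}x)\to 0$, reduces modulo $Z$ to a compact fundamental domain, and produces a fixed point).
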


\begin{proof} Observe first of all that from the definition of $\psi$ and 
Lemma~\ref{lem:height}(3), it follows that 
\bq\label{eq:qmlikeP}
\psi(g)=\lim_{n\to\infty}\left(\frac{\zeta(g^nx)-\zeta(x)}{L\,n}\right)\,.
\eq

From Proposition~\ref{prop:sandwich} and Lemma~\ref{lem:sandwich} 
it follows that $\check G^{++}=\{g\in\check G^+\colon\,\psi(g)>0\}=\{g\in G\colon\,g(x)\geq x\text{ for all }x\in\check R\text{ and }\psi(g)>0\}$.
Hence we need to show the equivalence for an element $g\in \check G$ between
\be
\item $g(x)\geq x$ for all $x\in\check R$ and $\psi(g)>0$, and
\item $g(x)>x$ for all $x\in\check R$.
\ee

To see that (1)$\Rightarrow$(2) it is immediate to verify that if (1) holds and (2) fails, 
then there exists a fixed point $x_0\in\check R$ and hence, because of \eqref{eq:qmlikeP},
$\psi(g)=0$, a contradiction.

To see that (2)$\Rightarrow$(1) we show that if $g(x)\geq x$ for all $x\in\check R$ and $\psi(g)=0$, 
then $g$ has a fixed point in $\check R$.   

Let now $g\in\check G^+$ and write
\bqn
\zeta(g^nx)-\zeta(x)=\sum_{i=0}^{n-1}[\zeta(g^{i+1}(x))-\zeta(g^i(x))]\,.
\eqn
Observe that since $g\succeq e$, all summands are non-negative.  
If now $\psi(g)=0$, then there exists a subsequence $(i_n)_{n\geq1}$ with
\bqn
\lim_{n\to\infty}\zeta(g^{i_n+1}(x))-\zeta(g^{i_n}(x))=0\,.
\eqn
Let $\Ff\subset\check R$ be a relatively compact fundamental domain for the $\langle Z\rangle$-action 
on $\check R$.  Then $g^{i_n}(x)=Z^{\ell_n}(y_n)$ for some $y_n\in\Ff$.
By taking into account \eqref{eq:q-m}, we deduce that 
\bqn
\lim_{n\to\infty}\zeta(g(y_n))-\zeta(y_n)=0\,.
\eqn
Since $g\succeq e$, we deduce from \eqref{eq:closed} that $\lim_{n\to\infty}d(gy_n,y_n)=0$
and hence any accumulation point of the sequence $(y_n)_{n\geq1}$ provides a fixed point for $g$.
\end{proof}

Finally observe that Theorem~\ref{thm:thm1.9} in the introduction follows from Proposition~\ref{prop:sandwich}(3),
Corollary~\ref{QMUnique} and Proposition~\ref{lem:dominant}.

\vskip1cm
\bibliographystyle{amsplain}

\def\cprime{$'$} \def\cprime{$'$}
\providecommand{\bysame}{\leavevmode\hbox to3em{\hrulefill}\thinspace}
\providecommand{\MR}{\relax\ifhmode\unskip\space\fi MR }
\providecommand{\MRhref}[2]{%
  \href{http://www.ams.org/mathscinet-getitem?mr=#1}{#2}
}
\providecommand{\href}[2]{#2}

\vskip1cm

\end{document}